\newtheorem{theorem}{Theorem}
\newtheorem{corollary}[theorem]{Corollary}
\newtheorem{definition}[theorem]{Definition}
\newtheorem{lemma}[theorem]{Lemma}
\newenvironment{proof}[1][Proof]{\noindent\textbf{#1.} }{\ \rule{0.5em}{0.5em}}
\begin{document}

\title{Hadamard type fractional time-delay semilinear differential equations: Delayed
Mittag-Leffler function approach}
\author{N. I. Mahmudov\\Department of Mathematics\\Eastern Mediterranean University \\Famagusta, 99628, T.R. North Cyprus \\Mersin 10, Turkey\\Email: nazim.mahmudov@emu.edu.tr}
\date{}
\maketitle

\begin{abstract}
We propose a delayed Mittag-Leffler type matrix function with logarithm, which
is an extension of the classical Mittag-Leffler type matrix function with
logarithm and delayed Mittag-Leffler type matrix function. With the help of
the delayed Mittag-Leffler type matrix function with logarithm, we give an
explicit form a of solutions to  nonhomogeneous Hadamard type fractional
time-delay linear differential equations. Moreover, we study existence
uniqueness and stability in Ulam-Hyers sense of the Hadamard type fractional
time-delay nonlinear equations.

\end{abstract}

\section{Introduction}

Mathematical descriptions of the models described through differential
equations with derivatives of non-integer orders have proved to be a very
useful instrument for modeling of various viscoelasticity cases, stability
theory, controllability theory, and other related fields. Time-delays are
often related with physico-chemical processes, electric networks, hydraulic
networks, heredity in population growth, the economy and other related
industries. In general, a peculiarity of the adequate mathematical models is
that the rate of change of these processes depends on past history.
Differential systems describing these models are called time-delay
differential equations. The qualitative theory of linear time-delay equations
is well investigated. Recently, the time-delay differential equations has been
considered. in \cite{diblik3}-\cite{diblik8}. In \cite{diblik6}-\cite{mah2}
authors derived the exact expressions of solutions of linear continuous and
discrete delay equations by proposing the concepts of delayed matrix
functions. On the other hand, stability concepts and relative controllability
problems of linear time-delay differential equations were investigated in
\cite{shuklin}-\cite{med1}.

The unification of differential equations with delay and differential
equations with fractional derivative is provided by differential equations,
including both delay and non-integer derivatives, so called time-delay
fractional differenial equations. In applications, this unification is useful
for creating highly adequate models of some systems with memory. One can
notice that works on this field involve Riemann-Liouville and Caputo type
fractional derivatives. Besides these derivatives, there is an other
fractional derivative, involving the logarithmic function, so called Hadamard
fractional derivative. For the literature on the related field of fractional
time-delay equations of Caputo type and Riemann--Liouville type, we refer the
researcher to \cite{wang2}-\cite{wang5}.

It is known that (\cite{kst}, page 235, \cite{wang5}) a solution of a Hadamard
fractional linear system%
\begin{align*}
\left(  ^{H}D_{1^{+}}^{\alpha}y\right)  \left(  t\right)   &  =\lambda
y\left(  t\right)  +f\left(  t\right)  ,\ \ t\in\left(  1,T\right]  ,h>0,\\
\left(  ^{H}I_{1^{+}}^{1-\alpha}y\right)  \left(  1^{+}\right)   &  =a\in
R,\lambda\in R,0<\alpha<1,
\end{align*}
has the form%
\[
y\left(  t\right)  =a\left(  \ln t\right)  ^{\alpha-1}E_{\alpha,\alpha}\left[
\lambda\left(  \ln t\right)  ^{\alpha}\right]  +\int_{1}^{t}\left(  \ln
\frac{t}{s}\right)  ^{\alpha-1}E_{\alpha,\alpha}\left[  \lambda\left(
\ln\frac{t}{s}\right)  ^{\alpha}\right]  f\left(  s\right)  \frac{ds}{s}.
\]
However, we find that there exists only one \cite{yang} work on the
representation of explicit solutions of Hadamard type fractional order delay
linear differential equations. In \cite{yang} authors studied the Hadamard
type fractional linear time-delay system%

\begin{align}
\left(  ^{H}D_{1^{+}}^{\alpha}y\right)  \left(  t\right)   &  =\mathcal{A}%
_{1}y\left(  t-h\right)  ,\ \ t\in\left(  1,T\right]  ,h>0,\nonumber\\
y\left(  t\right)   &  =\varphi\left(  t\right)  ,\ \ 1\leq t\leq
h,\label{ld1}\\
\left(  ^{H}I_{1^{+}}^{1-\alpha}y\right)  \left(  1^{+}\right)   &  =a\in
R^{n},\nonumber
\end{align}
where $\mathcal{A}_{01}$ is constant $n\times n$ square matrix.

Motivated by the above researches, we investigate a new class of Hadamard-type
fractional delay differential equations. We extend to consider an explicit
representation of solutions of a Hadamard type fractional time-delay
differential equation of the following form by introducing a new delayed M-L
type function with logarithm%
\begin{equation}
\left\{
\begin{array}
[c]{c}%
\left(  ^{H}D_{1^{+}}^{\alpha}y\right)  \left(  t\right)  =\mathcal{A}%
_{0}y\left(  t\right)  +\mathcal{A}_{1}y\left(  \frac{t}{h}\right)  +f\left(
t\right)  ,\ \ t\in\left(  1,T\right]  ,h>0,\\
y\left(  t\right)  =\varphi\left(  t\right)  ,\ \ \frac{1}{h}<t\leq1,\\
\left(  ^{H}I_{1^{+}}^{1-\alpha}y\right)  \left(  h^{+}\right)  =a\in
\mathbb{R}^{n},
\end{array}
\right.  \label{de11}%
\end{equation}
where $\left(  ^{H}D_{1^{+}}^{\alpha}y\right)  \left(  \cdot\right)  $ is the
Hadamard derivative of order $\alpha\in\left(  0,1\right)  $, $\mathcal{A}%
_{0},\mathcal{A}_{01}\in\mathbb{R}^{n\times n}$ denote constant matricies, and
$\varphi:\left[  \frac{1}{h},1\right]  \rightarrow\mathbb{R}^{n}$ is an
arbitrary Hadamard differentiable vector function, $f\in C\left(  \left[
1,T\right]  ,\mathbb{R}^{n}\right)  $, $T=h^{l}$ for a fixed natural number
$l$.

The second purpose of this paper is to study the existence and stability of
solutions for a Hadamard type fractional delay differential equation%
\begin{equation}
\left\{
\begin{array}
[c]{c}%
\left(  ^{H}D_{1^{+}}^{\alpha}y\right)  \left(  t\right)  =\mathcal{A}%
_{0}y\left(  t\right)  +\mathcal{A}_{1}y\left(  \frac{t}{h}\right)  +f\left(
t,y\left(  t\right)  \right)  ,\ \ t\in\left(  1,T\right]  ,h>0,\\
y\left(  t\right)  =\varphi\left(  t\right)  ,\ \ \frac{1}{h}<t\leq1,\\
\left(  ^{H}I_{1^{+}}^{1-\alpha}y\right)  \left(  1^{+}\right)  =a\in
\mathbb{R}^{n},
\end{array}
\right.  \label{de2}%
\end{equation}

At the end of this section, we state the main contribution of the paper as follows:

(i) We propose delayed M-L type functions $Y_{h,\alpha,\beta}^{\mathcal{A}%
_{0},\mathcal{A}_{1}}\left(  t,s\right)  $ with logarithms, by means of the
matrix equations (\ref{exp1}). We show that for $\mathcal{A}_{1}=\Theta$ the
function $Y_{h,\alpha,\beta}^{\mathcal{A}_{0},\mathcal{A}_{1}}\left(
t,s\right)  $ coincide with M-L type function with two parameters $\left(  \ln
t-\ln s\right)  ^{\beta-1}E_{\alpha,\beta}\left(  \mathcal{A}_{0}\left(  \ln
t-\ln s\right)  ^{\alpha}\right)  $. For $\mathcal{A}_{0}=\Theta$ delayed M-L
type function $Y_{h,\alpha,\beta}^{\mathcal{A}_{0},\mathcal{A}_{1}}\left(
t,s\right)  $ coincide with delayed M-L type matrix function with two
parameters $E_{h,\alpha,\beta}^{\mathcal{A}_{1}}\left(  \ln t-\ln h\right)
,.$introduced in (\ref{ml2}).

(ii) We explicitly write the solution of Hadamard type fractional delay linear
system (\ref{de11}) via delayed perturbation of M-L type function with
logarithm. Using this representation we study existence uniqueness and
Ulam-Hyers stability of nonlinear equation (\ref{de2}).

\section{Preliminaries}

Let $0<a<b<\infty$ and $C\left[  a,b\right]  $ be the Banach space of all
continuous functions $y:[a,b]\rightarrow R^{n}$ with the norm $\left\Vert
y\right\Vert _{C}:=\max\left\{  \left\Vert y\left(  t\right)  \right\Vert
:t\in\left[  a,b\right]  \right\}  $. For $0\leq\gamma<1$, we denote the space
$C_{\gamma,\ln}\left(  a,b\right]  $ by the weighted Banach space of the
continuous function $y:[a,b]\rightarrow R$, which is given by%
\[
C_{\gamma,\ln}\left(  a,b\right]  :=\left\{  y\left(  t\right)  :\left(
\ln\frac{t}{a}\right)  ^{\gamma}y\left(  t\right)  \in C\left[  a,b\right]
\right\}  ,
\]
endowed with the norm $\left\Vert y\right\Vert _{\gamma}:=\sup\left\{  \left(
\ln\frac{t}{a}\right)  ^{\gamma}\left\Vert y\left(  t\right)  \right\Vert
:t\in\left(  a,b\right]  \right\}  $.

The following definitions and lemmas will be used in this paper.

\begin{definition}
Hadamard fractional integral of order $\alpha\in R^{+}$ of function $y\left(
t\right)  $ is defined by
\[
\left(  ^{H}I_{a^{+}}^{\alpha}y\right)  \left(  t\right)  =\frac{1}%
{\Gamma\left(  \alpha\right)  }\int_{a}^{t}\left(  \ln\frac{t}{s}\right)
^{\alpha-1}y\left(  s\right)  \frac{ds}{s},\ 0<a<t\leq b,
\]
where $\Gamma$ is the Gamma function.
\end{definition}

\begin{definition}
Hadamard fractional derivative of order $\alpha\in\left[  n-1,n\right)  $,
$n\in Z^{+}$ of function $y\left(  t\right)  $ is defined by
\[
\left(  ^{H}D_{a^{+}}^{\alpha}y\right)  \left(  t\right)  =\frac{1}%
{\Gamma\left(  n-\alpha\right)  }\left(  t\frac{d}{dt}\right)  ^{n}\int
_{a}^{t}\left(  \ln\frac{t}{s}\right)  ^{n-\alpha+1}y\left(  s\right)
\frac{ds}{s},\ 0<a<t\leq b.
\]

\end{definition}

\begin{lemma}
If $a,\gamma,\beta>0$ then

\begin{itemize}
\item $\left(  ^{H}I_{a^{+}}^{\gamma}\left(  \ln\frac{t}{a}\right)  ^{\beta
-1}\right)  \left(  t\right)  =\dfrac{\Gamma\left(  \beta\right)  }
{\Gamma\left(  \beta+\gamma\right)  }\left(  \ln\frac{t}{a}\right)
^{\beta+\gamma-1}.$

\item $\left(  ^{H}D_{a^{+}}^{\gamma}\left(  \ln\frac{t}{a}\right)  ^{\beta
-1}\right)  \left(  t\right)  =\dfrac{\Gamma\left(  \beta\right)  }
{\Gamma\left(  \beta-\gamma\right)  }\left(  \ln\frac{t}{a}\right)
^{\beta-\gamma-1}.$

\item For $0<\beta<1$, $\left(  ^{H}D_{a^{+}}^{\beta}\left(  \ln\frac{t}%
{a}\right)  ^{\beta-1}\right)  \left(  t\right)  =0.$
\end{itemize}
\end{lemma}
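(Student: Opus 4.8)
The plan is to prove the three identities in the Lemma by direct computation, relying on the change of variables $u = \ln(s/a)/\ln(t/a)$ which turns the Hadamard integral of a power of $\ln(t/a)$ into a Beta integral. I would treat the first item as the fundamental computation and deduce the other two from it.

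\textbf{Step 1 (the integral identity).} Starting from the definition, write
\[
\left(  ^{H}I_{a^{+}}^{\gamma}\left(  \ln\tfrac{\cdot}{a}\right)^{\beta-1}\right)(t)
= \frac{1}{\Gamma(\gamma)}\int_a^t \left(\ln\tfrac{t}{s}\right)^{\gamma-1}\left(\ln\tfrac{s}{a}\right)^{\beta-1}\frac{ds}{s}.
\]
The substitution $s = a\exp\!\big(u\ln\tfrac{t}{a}\big)$, so that $\ln\tfrac{s}{a} = u\ln\tfrac{t}{a}$, $\ln\tfrac{t}{s} = (1-u)\ln\tfrac{t}{a}$, and $\tfrac{ds}{s} = \ln\tfrac{t}{a}\,du$, maps $s\in(a,t]$ to $u\in(0,1]$. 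The integrand becomes $\left(\ln\tfrac{t}{a}\right)^{\gamma+\beta-1}(1-u)^{\gamma-1}u^{\beta-1}$, so the integral equals $\left(\ln\tfrac{t}{a}\right)^{\gamma+\beta-1} B(\beta,\gamma) = \left(\ln\tfrac{t}{a}\right)^{\gamma+\beta-1}\Gamma(\beta)\Gamma(\gamma)/\Gamma(\beta+\gamma)$. Dividing by $\Gamma(\gamma)$ gives the first identity. This is exactly the Hadamard analogue of the classical semigroup-on-powers formula for the Riemann--Liouville integral, obtained via $\ln$ in place of the identity.

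\textbf{Step 2 (the derivative identity).} Using Definition 2 with $n = \lceil \gamma\rceil$, write $\left(^{H}D_{a^+}^{\gamma}y\right)(t) = \frac{1}{\Gamma(n-\gamma)}(t\tfrac{d}{dt})^n \left(^{H}I_{a^+}^{n-\gamma}y\right)(t)$ — i.e. $^{H}D^{\gamma} = \delta^n\, {}^{H}I^{n-\gamma}$ where $\delta = t\tfrac{d}{dt}$. Apply Step 1 with $\gamma$ replaced by $n-\gamma$ to get $\left(^{H}I_{a^+}^{n-\gamma}(\ln\tfrac{\cdot}{a})^{\beta-1}\right)(t) = \frac{\Gamma(\beta)}{\Gamma(\beta+n-\gamma)}\left(\ln\tfrac{t}{a}\right)^{\beta+n-\gamma-1}$ (this is where we need $\beta>0$ and $n-\gamma>0$ for the Beta integral to converge). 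Then one needs the elementary fact that $\delta\left(\ln\tfrac{t}{a}\right)^{p} = p\left(\ln\tfrac{t}{a}\right)^{p-1}$, since $\delta = \tfrac{d}{d(\ln t)}$ and $\ln\tfrac{t}{a} = \ln t - \ln a$. Iterating $n$ times brings down the factor $(\beta+n-\gamma-1)(\beta+n-\gamma-2)\cdots(\beta-\gamma) = \Gamma(\beta+n-\gamma)/\Gamma(\beta-\gamma)$, and the $\Gamma(\beta+n-\gamma)$ factors cancel, leaving $\frac{\Gamma(\beta)}{\Gamma(\beta-\gamma)}\left(\ln\tfrac{t}{a}\right)^{\beta-\gamma-1}$, as claimed.

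\textbf{Step 3 (the vanishing case).} For $0<\beta<1$ take $\gamma = \beta$, so $n = 1$. By Step 1, $\left(^{H}I_{a^+}^{1-\beta}(\ln\tfrac{\cdot}{a})^{\beta-1}\right)(t) = \Gamma(\beta)\left(\ln\tfrac{t}{a}\right)^{0} = \Gamma(\beta)$, a constant. Applying $\delta = t\tfrac{d}{dt}$ to a constant yields $0$, hence $^{H}D_{a^+}^{\beta}(\ln\tfrac{\cdot}{a})^{\beta-1} = 0$. (Equivalently, this is the $\gamma\to\beta$ limit of Step 2, where $1/\Gamma(\beta-\gamma)\to 1/\Gamma(0) = 0$.)

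The only mild subtlety — the ``main obstacle,'' though it is minor — is bookkeeping around convergence and branch choices: one must note that the Beta integral in Step 1 requires $\beta,\gamma>0$ (automatically $n-\gamma>0$ in Step 2 by the choice $n=\lceil\gamma\rceil$), and that $\left(\ln\tfrac{t}{a}\right)^{p}$ is well-defined and differentiable for $t>a$ since $\ln\tfrac{t}{a}>0$ there. Everything else is the substitution and the iterated application of $\delta$, both routine.
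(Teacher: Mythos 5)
Your proof is correct. The paper states this lemma without proof (it is a standard fact of Hadamard fractional calculus quoted from the literature, e.g.\ Kilbas--Srivastava--Trujillo), and the substitution $\ln\frac{s}{a}=u\ln\frac{t}{a}$ reducing the integral to a Beta function is exactly the device the paper itself uses in proving Lemma~\ref{lem:2}, so your argument is the natural one and fully consistent with the paper's methods; the only pedantic point is that for the derivative identity one should take $n$ with $\gamma\in[n-1,n)$ as in the paper's Definition~2 (so that $n-\gamma>0$ and the Beta integral in your Step~2 is always nondegenerate), rather than $n=\lceil\gamma\rceil$, which coincides with it except when $\gamma$ is an integer.
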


\begin{definition}
\label{def:01} M-L type matrix function with two parameters $e_{\alpha,\beta
}\left(  \mathcal{A}_{0};t\right)  :\mathbb{R}\rightarrow\mathbb{R}^{n\times
n}$ is defined by
\[
e_{\alpha,\beta}\left(  \mathcal{A}_{0};t\right)  :=t^{\beta-1}E_{\alpha
,\beta}\left(  \mathcal{A}_{0};t\right)  :=t^{\beta-1}%
{\displaystyle\sum\limits_{k=0}^{\infty}}
\frac{\mathcal{A}_{0}^{k}t^{\alpha k}}{\Gamma\left(  k\alpha+\beta\right)
},\ \ \ \alpha,\beta>0,t\in\mathbb{R}.
\]

\end{definition}

\begin{definition}
\label{def:11} Two parameters delayed M-L type matrix function $E_{h,\alpha
,\beta}^{\mathcal{A}_{01}}\left(  \ln t\right)  :\mathbb{R}^{+}\rightarrow
\mathbb{R}^{n\times n}$ with logarithm is defined by
\begin{equation}
E_{h,\alpha,\beta}^{\mathcal{A}_{1}}\left(  \ln t\right)  :=\left\{
\begin{tabular}
[c]{ll}%
$\Theta,$ & $-\infty<t\leq\frac{1}{h},$\\
$I\frac{\left(  \ln t+\ln h\right)  ^{\beta-1}}{\Gamma\left(  \beta\right)
},$ & $\frac{1}{h}<t\leq1,$\\
$I\frac{\left(  \ln t+\ln h\right)  ^{\beta-1}}{\Gamma\left(  \beta\right)
}+\mathcal{A}_{1}\frac{\left(  \ln t\right)  ^{\alpha+\beta-1}}{\Gamma\left(
\alpha+\beta\right)  }+...+\mathcal{A}_{1}^{p}\frac{\left(  \ln t-\left(
p-1\right)  \ln h\right)  ^{p\alpha+\beta-1}}{\Gamma\left(  p\alpha
+\beta\right)  },$ & $h^{p-1}<t\leq h^{p}.$%
\end{tabular}
\ \ \right.  \label{ml2}%
\end{equation}

\end{definition}

Our definition of the two parameters delayed M-L type matrix function with
logarithm differs substantially from the definition given in \cite{yang}.

In order to give a definition of delayed M-L type matrix functions with
logarithm, we introduce the following matrices $Y_{\alpha,\beta,k},$
$k=0,1,2,...$%
\begin{align}
Y_{\alpha,\beta,0}\left(  t,s\right)   &  =\left(  \ln\frac{t}{s}\right)
^{\beta-1}E_{\alpha,\beta}\left(  \mathcal{A}_{0};\ln\frac{t}{s}\right)
,\ \ \nonumber\\
Y_{\alpha,\beta,1}\left(  t,sh\right)   &  =\int_{sh}^{t}e_{\alpha,\alpha
}\left(  \mathcal{A}_{0};\ln\frac{t}{r}\right)  \mathcal{A}_{1}\frac{1}%
{\Gamma\left(  \beta\right)  }\left(  \ln\frac{r}{sh}\right)  ^{\beta-1}%
\frac{dr}{r},\nonumber\\
Y_{\alpha,\beta,k}\left(  t,sh^{k}\right)   &  =\int_{sh^{k}}^{t}%
e_{\alpha,\alpha}\left(  \mathcal{A}_{0};\ln\frac{t}{r}\right)  \mathcal{A}%
_{1}Y_{\alpha,\beta,k-1}\left(  \frac{r}{h},sh^{k-1}\right)  \frac{dr}{r}.
\label{for1}%
\end{align}

\begin{definition}
\label{def:22} Let $\mathcal{A}_{0},\mathcal{A}_{1}\in\mathbb{R}^{n\times n}$
be fixed matrices and $k\in\mathbb{N}\cup\left\{  0\right\}  $. Delayed M-L
type function $Y_{h,\alpha,\beta}^{\mathcal{A}_{0},\mathcal{A}_{1}}\left(
\cdot,\cdot\right)  :\mathbb{R\times R\rightarrow R}^{n}$ with logarithm
generated by $\mathcal{A}_{0},\mathcal{A}_{1}$\ is defined by
\begin{equation}
Y_{h,\alpha,\beta}^{\mathcal{A}_{0},\mathcal{A}_{1}}\left(  t,s\right)  :=%
{\displaystyle\sum\limits_{j=0}^{\infty}}
Y_{\alpha,\beta,j}\left(  t,sh^{j}\right)  H\left(  t-sh^{j}\right)  =\left\{
\begin{tabular}
[c]{ll}%
$\Theta,$ & $-\infty<t<s,$\\
$I,$ & $t=s,$\\
$Y_{\alpha,\beta,0}\left(  t,s\right)  +Y_{\alpha,\beta,1}\left(  t,sh\right)
+...+Y_{\alpha,\beta,k}\left(  t,sh^{k}\right)  ,$ & $sh^{k}<t\leq sh^{k+1},$%
\end{tabular}
\ \ \ \ \right.  \label{exp1}%
\end{equation}
where $H\left(  t\right)  $ is a Heaviside function: $H\left(  t\right)
=\left\{
\begin{array}
[c]{c}%
1,\ \ t>0,\\
0,\ \ \ t\leq0.
\end{array}
\right.  $.
\end{definition}

\begin{lemma}
\label{lem:2}Let $a,b>-1$. For $sh^{k}<t\leq sh^{k+1},$ $k\in\mathbb{N}%
\cup\{0\}$, one has
\begin{align}
\int_{r}^{t}\left(  \ln\frac{t}{s}\right)  ^{a}\left(  \ln\frac{s}{r}\right)
^{b}\frac{ds}{s}  &  =\left(  \ln\frac{t}{r}\right)  ^{a+b+1}\mathcal{A}%
_{1}[a+1,b+1],\label{inq1}\\
\frac{1}{\Gamma\left(  1-\alpha\right)  }\int_{sh^{k}}^{t}\left(  \ln\frac
{t}{r}\right)  ^{-\alpha}Y_{\alpha,\beta,k}\left(  r,sh^{k}\right)  \frac
{dr}{r}  &  =\int_{sh^{k}}^{t}E_{\alpha,1}\left(  \mathcal{A}_{0};\ln\frac
{t}{r}\right)  \mathcal{A}_{1}Y_{\alpha,\beta,k-1}\left(  \frac{r}{h}%
,sh^{k-1}\right)  \frac{dr}{r}. \label{inq2}%
\end{align}

\end{lemma}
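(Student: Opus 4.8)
The plan is to establish the two identities in order, using \eqref{inq1} as the computational engine for \eqref{inq2}. Throughout, the right-hand side of \eqref{inq1} is understood to be $\left(\ln\frac{t}{r}\right)^{a+b+1}$ times the Euler Beta function $B(a+1,b+1)$, and for \eqref{inq2} I take $k\ge1$ (so that $Y_{\alpha,\beta,k-1}$ in \eqref{for1} is meaningful).

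For \eqref{inq1} I would make the change of variables $u=\frac{\ln(s/r)}{\ln(t/r)}$, i.e. $\ln s=\ln r+u\ln(t/r)$, so that $\frac{ds}{s}=\ln(t/r)\,du$, the limits $s=r,t$ become $u=0,1$, and $\ln\frac{t}{s}=(1-u)\ln\frac{t}{r}$, $\ln\frac{s}{r}=u\ln\frac{t}{r}$. The integral then factors as $\left(\ln\frac{t}{r}\right)^{a+b+1}\int_{0}^{1}u^{b}(1-u)^{a}\,du=\left(\ln\frac{t}{r}\right)^{a+b+1}B(a+1,b+1)$, the last equality being the definition and the symmetry of the Beta function; the hypotheses $a,b>-1$ are exactly the convergence conditions for this integral. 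Note that \eqref{inq1} does not actually use the range restriction $sh^{k}<t\le sh^{k+1}$, which is relevant only to \eqref{inq2}.

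For \eqref{inq2} I would insert the recursion \eqref{for1}, written with inner variable $\rho$,
\[
Y_{\alpha,\beta,k}(r,sh^{k})=\int_{sh^{k}}^{r}e_{\alpha,\alpha}\!\left(\mathcal{A}_{0};\ln\tfrac{r}{\rho}\right)\mathcal{A}_{1}Y_{\alpha,\beta,k-1}\!\left(\tfrac{\rho}{h},sh^{k-1}\right)\frac{d\rho}{\rho},
\]
into the left side of \eqref{inq2} and interchange the order of integration over $sh^{k}\le\rho\le r\le t$. This leaves the scalar kernel
\[
\frac{1}{\Gamma(1-\alpha)}\int_{\rho}^{t}\left(\ln\tfrac{t}{r}\right)^{-\alpha}e_{\alpha,\alpha}\!\left(\mathcal{A}_{0};\ln\tfrac{r}{\rho}\right)\frac{dr}{r}
\]
acting on $\mathcal{A}_{1}Y_{\alpha,\beta,k-1}(\rho/h,sh^{k-1})\,\frac{d\rho}{\rho}$. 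Expanding $e_{\alpha,\alpha}(\mathcal{A}_{0};x)=\sum_{m\ge0}\mathcal{A}_{0}^{m}x^{\alpha(m+1)-1}/\Gamma(\alpha(m+1))$ and integrating term by term via \eqref{inq1} with $a=-\alpha$ and $b=\alpha(m+1)-1$, the $m$-th term contributes $\left(\ln\tfrac{t}{\rho}\right)^{\alpha m}B(1-\alpha,\alpha(m+1))$; since $B(1-\alpha,\alpha(m+1))=\Gamma(1-\alpha)\Gamma(\alpha(m+1))/\Gamma(1+\alpha m)$, the gamma factors cancel and the kernel collapses to $\sum_{m\ge0}\mathcal{A}_{0}^{m}\left(\ln\tfrac{t}{\rho}\right)^{\alpha m}/\Gamma(1+\alpha m)=E_{\alpha,1}\!\left(\mathcal{A}_{0};\ln\tfrac{t}{\rho}\right)$. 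Reinstating the outer integral and relabelling $\rho$ as $r$ yields exactly the right side of \eqref{inq2}.

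The two elementary substitutions and the Beta--Gamma bookkeeping are routine. The points requiring genuine care are the justification of Fubini's theorem and of the term-by-term integration of the Mittag-Leffler series: both follow from the absolute and locally uniform convergence of that entire series on the compact interval $[sh^{k},t]$, together with the integrability of $(\ln(t/r))^{-\alpha}(\ln(r/\rho))^{\alpha(m+1)-1}$ near the endpoints, guaranteed by $-\alpha>-1$ (here $\alpha\in(0,1)$) and $\alpha(m+1)-1>-1$. One should also check the domain bookkeeping ensuring that $\rho/h$ falls in the range in which $Y_{\alpha,\beta,k-1}$ is the branch appearing in \eqref{for1}, which is precisely what the hypothesis $sh^{k}<t\le sh^{k+1}$ secures.
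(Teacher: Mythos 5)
Your proposal is correct and follows essentially the same route as the paper: the substitution $\tau=\ln(s/r)/\ln(t/r)$ reducing \eqref{inq1} to the Beta integral (the paper's $\mathcal{A}_{1}[a+1,b+1]$ in the statement is a typo for $\mathcal{B}[a+1,b+1]$, as its own proof confirms), followed by Fubini and term-by-term integration of the Mittag--Leffler series to collapse the inner kernel to $E_{\alpha,1}$. You merely spell out the Beta--Gamma cancellation and the convergence justifications that the paper leaves implicit.
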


\begin{proof}
Let $\ln\dfrac{s}{r}=\tau\ln\dfrac{t}{r}.$ Then $s=r\left(  \dfrac{t}%
{r}\right)  ^{\tau}$, $ds=r\left(  \dfrac{t}{r}\right)  ^{\tau}\ln\dfrac{t}%
{r}d\tau$. So we have
\[
\ln\frac{t}{s}=\ln\left(  \dfrac{t}{r}\right)  ^{1-\tau}=\left(
1-\tau\right)  \ln\left(  \dfrac{t}{r}\right)  ,
\]
and
\begin{align*}
\int_{r}^{t}\left(  \ln\frac{t}{s}\right)  ^{a}\left(  \ln\frac{s}{r}\right)
^{b}\frac{ds}{s}  &  =\int_{0}^{1}\left(  \ln\frac{t}{r}\right)  ^{a}\left(
1-\tau\right)  ^{a}\left(  \ln\frac{t}{r}\right)  ^{b}\tau^{b}\ln\dfrac{t}%
{r}d\tau\\
&  =\left(  \ln\frac{t}{r}\right)  ^{a+b+1}\mathcal{B}\left[  a+1,b+1\right]
.
\end{align*}
To prove (\ref{inq2}), firstly using (\ref{inq1}) we calculate it for $k=0:$
\begin{align}
\frac{1}{\Gamma\left(  1-\alpha\right)  }\int_{s}^{t}\left(  \ln\frac{t}%
{r}\right)  ^{-\alpha}Y_{\alpha,\beta,0}\left(  r,s\right)  \frac{dr}{r}  &
=\frac{1}{\Gamma\left(  1-\alpha\right)  }\frac{1}{\Gamma\left(  \beta\right)
}\int_{s}^{t}\left(  \ln\frac{t}{r}\right)  ^{-\alpha}\left(  \ln\frac{r}%
{s}\right)  ^{\beta-1}\frac{dr}{r}\nonumber\\
&  =\frac{\left(  \ln t-\ln s\right)  ^{-\alpha+\beta}}{\Gamma\left(
-\alpha+\beta+1\right)  }. \label{form1}%
\end{align}
Similarly, for any $k\in\mathbb{N}$we have
\begin{align*}
&  \frac{1}{\Gamma\left(  1-\alpha\right)  }\int_{sh^{k}}^{t}\left(  \ln
\frac{t}{r}\right)  ^{-\alpha}Y_{\alpha,\beta,k}\left(  r,sh^{k}\right)
\frac{dr}{r}\\
&  =\frac{1}{\Gamma\left(  1-\alpha\right)  }\int_{sh^{k}}^{t}\left(  \ln
\frac{t}{r}\right)  ^{-\alpha}\int_{sh^{k}}^{r}e_{\alpha,\alpha}\left(
\mathcal{A}_{0};\ln\frac{r}{r_{1}}\right)  \mathcal{A}_{1}Y_{\alpha,\beta
,k-1}\left(  \frac{r_{1}}{h},sh^{k-1}\right)  H\left(  r-sh^{k}\right)
\frac{dr_{1}}{r_{1}}\frac{dr}{r}\\
&  =\frac{1}{\Gamma\left(  1-\alpha\right)  }\int_{sh^{k}}^{t}\int_{r_{1}}%
^{t}\left(  \ln\frac{t}{r}\right)  ^{-\alpha}e_{\alpha,\alpha}\left(
\mathcal{A}_{0};\ln\frac{r}{r_{1}}\right)  \frac{dr}{r}\mathcal{A}%
_{1}Y_{\alpha,\beta,k-1}\left(  \frac{r_{1}}{h},sh^{k-1}\right)  H\left(
r-sh^{k}\right)  \frac{dr_{1}}{r_{1}}\\
&  =\int_{sh^{k}}^{t}E_{\alpha,1}\left(  \mathcal{A}_{0};\ln\frac{t}{r_{1}%
}\right)  \mathcal{A}_{1}Y_{\alpha,\beta,k-1}\left(  \frac{r_{1}}{h}%
,sh^{k-1}\right)  \frac{dr_{1}}{r_{1}}.
\end{align*}

\end{proof}

\begin{lemma}
\label{lem:1}If $\mathcal{A}_{0}$ and $\mathcal{A}_{1}$ are permutable
matrices, then
\begin{equation}
Y_{\alpha,\beta,k}\left(  t,sh^{k}\right)  =\mathcal{A}_{1}^{k}\sum
_{n=0}^{\infty}\left(
\begin{array}
[c]{c}%
n+k\\
k
\end{array}
\right)  \mathcal{A}_{0}^{n}\frac{\left(  \ln t-\ln sh^{k}\right)
^{n\alpha+k\alpha+\beta-1}}{\Gamma\left(  n\alpha+k\alpha+\beta\right)  }.
\label{id1}%
\end{equation}

\end{lemma}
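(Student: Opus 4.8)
The plan is to prove the closed form \eqref{id1} by induction on $k$, exploiting the recursive definition \eqref{for1} together with the Beta-function integral identity \eqref{inq1} of Lemma~\ref{lem:2}. The base case $k=0$ is immediate: by definition $Y_{\alpha,\beta,0}(t,s)=\left(\ln\frac{t}{s}\right)^{\beta-1}E_{\alpha,\beta}\left(\mathcal{A}_0;\ln\frac{t}{s}\right)$, and expanding the Mittag-Leffler matrix function from Definition~\ref{def:01} gives exactly the $k=0$ instance of the right-hand side of \eqref{id1} (the binomial coefficient $\binom{n}{0}$ is $1$). For the inductive step, assume \eqref{id1} holds for $k-1$ and substitute it into the recursion $Y_{\alpha,\beta,k}\left(t,sh^{k}\right)=\int_{sh^{k}}^{t}e_{\alpha,\alpha}\left(\mathcal{A}_0;\ln\frac{t}{r}\right)\mathcal{A}_1 Y_{\alpha,\beta,k-1}\left(\frac{r}{h},sh^{k-1}\right)\frac{dr}{r}$.

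After substitution, I would expand $e_{\alpha,\alpha}\left(\mathcal{A}_0;\ln\frac{t}{r}\right)=\sum_{m\ge 0}\frac{\mathcal{A}_0^m(\ln\frac{t}{r})^{m\alpha+\alpha-1}}{\Gamma(m\alpha+\alpha)}$, note that $\ln\frac{r/h}{sh^{k-1}}=\ln\frac{r}{sh^{k}}=\ln r-\ln sh^{k}$, and interchange the (absolutely convergent) sums with the integral. Each resulting term is of the form $\int_{sh^{k}}^{t}\left(\ln\frac{t}{r}\right)^{m\alpha+\alpha-1}\left(\ln\frac{r}{sh^{k}}\right)^{j\alpha+(k-1)\alpha+\beta-1}\frac{dr}{r}$, which by \eqref{inq1} evaluates to $\left(\ln\frac{t}{sh^{k}}\right)^{m\alpha+\alpha+j\alpha+(k-1)\alpha+\beta-1}\mathcal{B}\!\left[m\alpha+\alpha,\,j\alpha+(k-1)\alpha+\beta\right]$. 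Using the permutability of $\mathcal{A}_0$ and $\mathcal{A}_1$, the matrix factors combine as $\mathcal{A}_1^{k}\mathcal{A}_0^{m+j}$, and the Gamma factors from the Beta function and from the two expansions collapse via $\frac{1}{\Gamma(m\alpha+\alpha)}\binom{j+k-1}{k-1}\frac{1}{\Gamma(j\alpha+(k-1)\alpha+\beta)}\cdot\frac{\Gamma(m\alpha+\alpha)\Gamma(j\alpha+(k-1)\alpha+\beta)}{\Gamma(m\alpha+\alpha+j\alpha+(k-1)\alpha+\beta)}=\frac{1}{\Gamma((m+j)\alpha+k\alpha+\beta)}\binom{j+k-1}{k-1}$.

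The final step is to reindex by setting $n=m+j$ and to verify the combinatorial identity $\sum_{j=0}^{n}\binom{j+k-1}{k-1}=\binom{n+k}{k}$ (the hockey-stick identity), which produces exactly the coefficient $\binom{n+k}{k}$ appearing in \eqref{id1}; the power $\left(\ln t-\ln sh^{k}\right)^{n\alpha+k\alpha+\beta-1}$ and the matrix $\mathcal{A}_1^{k}\mathcal{A}_0^{n}$ match as well. I expect the main obstacle to be purely bookkeeping: justifying the interchange of the double series with the integral (which follows from absolute convergence of the Mittag-Leffler series on compact $\ln$-intervals) and carrying the Gamma/Beta cancellation and the hockey-stick reindexing through cleanly without sign or index errors. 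Permutability of $\mathcal{A}_0$ and $\mathcal{A}_1$ is used essentially, since it is what lets the scalar Beta-function computation be lifted verbatim to the matrix setting.
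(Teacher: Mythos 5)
Your proposal is correct and follows essentially the same route as the paper: induction on $k$ through the recursion (\ref{for1}), termwise integration via the Beta identity (\ref{inq1}), permutability to collect the matrix factors as $\mathcal{A}_1^{k}\mathcal{A}_0^{m+j}$, and the hockey-stick identity $\sum_{j=0}^{n}\binom{j+k-1}{k-1}=\binom{n+k}{k}$ to produce the coefficient. The only difference is presentational — the paper carries out the cases $k=1$ and $k=2$ explicitly and then appeals to induction, while you write the general inductive step directly.
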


\begin{proof}
The proof is based on the inequality (\ref{inq1}). For $k=1,$we have
\begin{align*}
Y_{\alpha,\beta,1}\left(  t,sh\right)   &  =\int_{sh}^{t}e_{\alpha,\alpha
}\left(  \mathcal{A}_{0};\ln\frac{t}{r}\right)  \mathcal{A}_{1}Y_{\alpha
,\beta,0}\left(  \frac{r}{h},s\right)  \frac{dr}{r}\\
&  =\mathcal{A}_{1}\int_{sh}^{t}\sum_{n=0}^{\infty}\mathcal{A}_{0}^{n}\left(
\ln\frac{t}{r}\right)  ^{\alpha n+\alpha-1}\frac{1}{\Gamma\left(  \alpha
n+\alpha\right)  }\sum_{k=0}^{\infty}\mathcal{A}_{0}^{k}\left(  \ln\frac
{r}{sh}\right)  ^{\alpha n+\beta-1}\frac{1}{\Gamma\left(  \alpha
k+\beta\right)  }\frac{dr}{r}\\
&  =\mathcal{A}_{1}\int_{sh}^{t}\sum_{n=0}^{\infty}\sum_{k=0}^{k}%
\mathcal{A}_{0}^{k}\left(  \ln\frac{t}{r}\right)  ^{\alpha k+\alpha-1}\frac
{1}{\Gamma\left(  \alpha k+\alpha\right)  }\mathcal{A}_{0}^{n-k}\left(
\ln\frac{r}{sh}\right)  ^{\alpha\left(  n-k\right)  +\beta-1}\frac{1}%
{\Gamma\left(  \alpha\left(  n-k\right)  +\beta\right)  }\frac{dr}{r}\\
&  =\mathcal{A}_{1}\sum_{n=0}^{\infty}\sum_{k=0}^{k}\mathcal{A}_{0}^{n}%
\frac{1}{\Gamma\left(  \alpha k+\alpha\right)  \Gamma\left(  \alpha\left(
n-k\right)  +\beta\right)  }\int_{sh}^{t}\left(  \ln\frac{t}{r}\right)
^{\alpha k+\alpha-1}\left(  \ln\frac{r}{sh}\right)  ^{\alpha\left(
n-k\right)  +\beta-1}\frac{dr}{r}\\
&  =\mathcal{A}_{1}\sum_{n=0}^{\infty}\sum_{k=0}^{k}\mathcal{A}_{0}^{n}%
\frac{\left(  \ln t-\ln sh\right)  ^{\alpha n+\alpha+\beta-1}}{\Gamma\left(
\alpha k+\alpha\right)  \Gamma\left(  \alpha\left(  n-k\right)  +\beta\right)
}\mathcal{A}_{01}\left[  \alpha k+\alpha,\alpha\left(  n-k\right)
+\beta\right]  \\
&  =\mathcal{A}_{1}\sum_{n=0}^{\infty}\left(
\begin{array}
[c]{c}%
n+1\\
1
\end{array}
\right)  \mathcal{A}_{0}^{n}\frac{\left(  \ln t-\ln sh\right)  ^{\alpha
n+\alpha+\beta-1}}{\Gamma\left(  \alpha n+\alpha+\beta\right)  }.
\end{align*}
For $k=2,$ we get
\begin{align*}
Y_{\alpha,\beta,2}\left(  t,sh^{2}\right)   &  =\int_{sh^{2}}^{t}%
e_{\alpha,\alpha}\left(  \mathcal{A}_{0};\ln\frac{t}{r}\right)  \mathcal{A}%
_{1}Y_{\alpha,\beta,1}\left(  \frac{r}{h},sh\right)  dr\\
&  =\mathcal{A}_{1}\int_{sh^{2}}^{t}\sum_{n=0}^{\infty}\sum_{k=0}%
^{n}\mathcal{A}_{0}^{k}\left(  \ln\frac{t}{r}\right)  ^{k\alpha+\alpha-1}%
\frac{1}{\Gamma\left(  k\alpha+\alpha\right)  }\mathcal{A}_{1}\left(
\begin{array}
[c]{c}%
n+1-k\\
1
\end{array}
\right)  \\
&  \ \ \ \ \times\mathcal{A}_{0}^{n-k}\left(  \ln\frac{r}{sh^{2}}\right)
^{n\alpha-k\alpha+\alpha+\beta-1}\frac{1}{\Gamma\left(  n\alpha-k\alpha
+\alpha+\beta\right)  }\\
&  =\mathcal{A}_{1}^{2}\sum_{n=0}^{\infty}\sum_{k=0}^{n}\left(
\begin{array}
[c]{c}%
n+1-k\\
1
\end{array}
\right)  \mathcal{A}_{0}^{n}\frac{1}{\Gamma\left(  k\alpha+\beta\right)
\Gamma\left(  n\alpha-k\alpha+\alpha+\beta\right)  }\int_{sh^{2}}^{t}\left(
\ln\frac{t}{r}\right)  ^{k\alpha+\alpha-1}\left(  \ln\frac{r}{sh^{2}}\right)
^{n\alpha-k\alpha+\alpha+\beta-1}dr\\
&  =\mathcal{A}_{1}^{2}\sum_{n=0}^{\infty}\left(  \sum_{k=0}^{n}\left(
\begin{array}
[c]{c}%
n+1-k\\
1
\end{array}
\right)  \right)  \mathcal{A}_{0}^{n}\left(  \ln\frac{r}{sh^{2}}\right)
^{n\alpha+2\alpha+\beta-1}\frac{1}{\Gamma\left(  n\alpha+2\alpha+\beta\right)
}\\
&  =\mathcal{A}_{1}^{2}\sum_{n=0}^{\infty}\left(
\begin{array}
[c]{c}%
n+2\\
2
\end{array}
\right)  \mathcal{A}_{0}^{n}\frac{\left(  \ln t-\ln sh^{2}\right)
^{n\alpha+2\alpha+\beta-1}}{\Gamma\left(  n\alpha+2\alpha+\beta\right)  }.
\end{align*}
Using the Mathematical Induction in a similar manner we can get (\ref{id1}).
\end{proof}

According to Lemma \ref{lem:1} in the case $\mathcal{A}_{0}\mathcal{A}%
_{1}=\mathcal{A}_{1}\mathcal{A}_{0}$ delayed M-L type function $Y_{h,\alpha
,\beta}^{\mathcal{A}_{0},\mathcal{A}_{1}}\left(  t,s\right)  $ has a simple
form:
\begin{equation}
Y_{h,\alpha,\beta}^{\mathcal{A}_{0},\mathcal{A}_{1}}\left(  t,s\right)
:=\left\{
\begin{tabular}
[c]{ll}%
$\Theta,$ & $-\infty\leq t<s,$\\
$I,$ & $t=s,$\\
$%
{\displaystyle\sum\limits_{i=0}^{\infty}}
\mathcal{A}_{0}^{i}\dfrac{\left(  \ln t-\ln s\right)  ^{i\alpha+\beta-1}%
}{\Gamma\left(  i\alpha+\beta\right)  }+%
{\displaystyle\sum\limits_{i=1}^{\infty}}
\left(
\begin{array}
[c]{c}%
i\\
1
\end{array}
\right)  \mathcal{A}_{0}^{i-1}\mathcal{A}_{1}\dfrac{\left(  \ln t-\ln
sh\right)  ^{i\alpha+\beta-1}}{\Gamma\left(  i\alpha+\beta\right)  }$ & \\
$+...+%
{\displaystyle\sum\limits_{i=p}^{\infty}}
\left(
\begin{array}
[c]{c}%
i\\
p
\end{array}
\right)  \mathcal{A}_{0}^{i-p}\mathcal{A}_{1}^{p}\dfrac{\left(  \ln t-\ln
sh^{p}\right)  ^{i\alpha+\beta-1}}{\Gamma\left(  i\alpha+\beta\right)  },$ &
$sh^{p}<t\leq sh^{p+1}.$%
\end{tabular}
\ \ \ \ \ \ \ \ \ \ \ \ \right.  \label{perf1}%
\end{equation}

Next lemma shows some special cases of the delayed M-L type function.

\begin{lemma}
\label{lem:3}Let $Y_{h,\alpha,\beta}^{\mathcal{A}_{0},\mathcal{A}_{1}}\left(
t,s\right)  $ be defined by (\ref{exp1}). Then the following holds true:

\begin{description}
\item[(i)] if $\mathcal{A}_{0}=\Theta$ then $Y_{h,\alpha,\beta}^{\mathcal{A}%
_{0},\mathcal{A}_{1}}\left(  t,1\right)  =E_{h,\alpha,\beta}^{\mathcal{A}_{1}%
}\left(  \ln\frac{t}{h}\right)  ,\ \ h^{k-1}<\frac{t}{h}\leq h^{k}$,

\item[(ii)] if $\mathcal{A}_{1}=\Theta$ then $Y_{h,\alpha,\beta}%
^{\mathcal{A}_{0},\mathcal{A}_{1}}\left(  t,s\right)  =\left(  \ln\frac{t}%
{s}\right)  ^{\beta-1}E_{\alpha,\beta}\left(  \mathcal{A}_{0}\left(  \ln
\frac{t}{s}\right)  ^{\alpha}\right)  =e_{\alpha,\beta}\left(  \mathcal{A}%
_{0};\ln\frac{t}{s}\right)  ,$

\item[(iii)] if $\alpha=\beta=1$ and $\mathcal{A}_{0}\mathcal{A}%
_{1}=\mathcal{A}_{1}\mathcal{A}_{0}$ then $Y_{h,1,1}^{\mathcal{A}%
_{0},\mathcal{A}_{1}}\left(  t,s\right)  =e^{\mathcal{A}_{0}\left(  \ln t-\ln
s\right)  }e_{h}^{\mathcal{A}_{11}\left(  \ln t-\ln h\right)  }$,
$\mathcal{A}_{11}=\mathcal{A}_{1}e^{-\mathcal{A}_{0}\ln h},$ $sh^{k}<t\leq
sh^{k+1}$.
\end{description}
\end{lemma}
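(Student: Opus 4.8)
The plan is to verify each of the three special cases directly from the series-type formula in \eqref{exp1} and \eqref{perf1}, using Lemma~\ref{lem:1} wherever permutability is assumed and the explicit delayed series otherwise. For part (ii), set $\mathcal{A}_{1}=\Theta$. Then in the defining recursion \eqref{for1} every term $Y_{\alpha,\beta,k}$ with $k\geq 1$ contains a factor $\mathcal{A}_{1}=\Theta$, hence vanishes, so the sum in \eqref{exp1} collapses to its single surviving term $Y_{\alpha,\beta,0}(t,s)=\left(\ln\frac{t}{s}\right)^{\beta-1}E_{\alpha,\beta}\left(\mathcal{A}_{0};\ln\frac{t}{s}\right)$, which by Definition~\ref{def:01} (with the substitution $t\mapsto\ln\frac{t}{s}$) equals $e_{\alpha,\beta}\left(\mathcal{A}_{0};\ln\frac{t}{s}\right)$. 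This is immediate and requires no computation.

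For part (i), set $\mathcal{A}_{0}=\Theta$. Now $e_{\alpha,\alpha}\left(\Theta;\ln\frac{t}{r}\right)=\frac{1}{\Gamma(\alpha)}\left(\ln\frac{t}{r}\right)^{\alpha-1}I$, so the recursion \eqref{for1} becomes an iterated Hadamard-type fractional integral. I would evaluate $Y_{\alpha,\beta,k}$ by induction on $k$: the base case $Y_{\alpha,\beta,0}(t,s)=\frac{1}{\Gamma(\beta)}\left(\ln\frac{t}{s}\right)^{\beta-1}I$, and the inductive step uses the beta-integral identity \eqref{inq1} of Lemma~\ref{lem:2} to collapse $\int e_{\alpha,\alpha}(\Theta;\cdot)\,\mathcal{A}_{1}Y_{\alpha,\beta,k-1}(\cdot)$ into $\mathcal{A}_{1}^{k}\frac{\left(\ln t-\ln sh^{k}\right)^{k\alpha+\beta-1}}{\Gamma(k\alpha+\beta)}$ (this is exactly \eqref{id1} specialized to $\mathcal{A}_0=\Theta$, where only the $n=0$ term survives). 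Summing over $k$ and comparing the window $sh^{k}<t\leq sh^{k+1}$ with $\mathcal{A}_0=\Theta$, $s=1$ against the piecewise definition \eqref{ml2} of $E_{h,\alpha,\beta}^{\mathcal{A}_{1}}$ evaluated at $\ln\frac{t}{h}$ (so that $\ln\frac{t}{h}+\ln h=\ln t$ reproduces the logarithm shifts), the two piecewise expressions coincide term by term. The bookkeeping of the shifted powers $\left(\ln t-(p-1)\ln h\right)$ versus $\left(\ln t-\ln sh^{k}\right)$ with $s=1$ is the only place care is needed.

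For part (iii), assume $\alpha=\beta=1$ and $\mathcal{A}_{0}\mathcal{A}_{1}=\mathcal{A}_{1}\mathcal{A}_{0}$. I would start from the simplified permutable form \eqref{perf1}: with $\alpha=\beta=1$ the inner sums become $\sum_{i\geq p}\binom{i}{p}\mathcal{A}_{0}^{i-p}\mathcal{A}_{1}^{p}\frac{\left(\ln t-\ln sh^{p}\right)^{i}}{\Gamma(i+1)}$. Reindexing $i=p+m$ and using $\binom{p+m}{p}=\binom{p+m}{m}$, each such sum factors: pulling out $\mathcal{A}_1^p\frac{(\ln t - \ln sh^p)^p}{p!}$ and recognizing the remaining series in $m$ as $\sum_m \mathcal{A}_0^m\frac{(\ln t-\ln sh^p)^m}{m!}=e^{\mathcal{A}_0(\ln t-\ln sh^p)}$. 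Then the full sum over $p$ from $0$ to $k$ (on the window $sh^{k}<t\leq sh^{k+1}$) is $e^{\mathcal{A}_0(\ln t - \ln s)}\sum_{p=0}^{k}\left(\mathcal{A}_1 e^{-\mathcal{A}_0\ln h}\right)^{p}\frac{\left(\ln t-\ln sh^{p}\right)^{p}}{p!}$ after using the permutability to slide $e^{-\mathcal{A}_0 p\ln h}$ out and relabel $\mathcal{A}_{11}=\mathcal{A}_1 e^{-\mathcal{A}_0\ln h}$; the second factor is precisely the classical delayed exponential $e_{h}^{\mathcal{A}_{11}(\ln t-\ln h)}$ evaluated on that window.

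The main obstacle is part (iii): getting the exponential factor $e^{-\mathcal{A}_0\ln h}$ to appear with exactly the right power so that $\mathcal{A}_{1}^{p}$ converts to $\mathcal{A}_{11}^{p}$ requires carefully tracking how the shift $\ln sh^{p}=\ln s + p\ln h$ splits between the $e^{\mathcal{A}_0(\ln t - \ln sh^p)}$ factor and the delayed-exponential factor, and this works only because $\mathcal{A}_0$ and $\mathcal{A}_1$ commute (so $e^{-\mathcal{A}_0\ln h}$ commutes past every $\mathcal{A}_1$). Parts (i) and (ii) are essentially term-by-term matching once the recursion has been unwound.
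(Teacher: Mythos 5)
Your proposal is correct and follows essentially the same route as the paper: part (ii) by noting all terms with $k\geq 1$ vanish, part (i) by evaluating $Y_{\alpha,\beta,k}$ with the beta-integral identity (the paper does the cases $k=0,1,2$ explicitly and states the general pattern, which is indeed the $\mathcal{A}_{0}=\Theta$ specialization of (\ref{id1})), and part (iii) by resumming (\ref{perf1}) into exponential factors and absorbing $e^{-\mathcal{A}_{0}\ln h}$ into $\mathcal{A}_{11}$ via commutativity. No substantive differences.
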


\begin{proof}
(i) If $\mathcal{A}_{0}=\Theta,$ then the formula (\ref{for1})
\begin{align*}
Y_{\alpha,\beta,0}\left(  t,s\right)   &  =e_{\alpha,\beta}\left(  \Theta
,\ln\frac{t}{s}\right)  =I\frac{\left(  \ln t-\ln s\right)  ^{\beta-1}}%
{\Gamma\left(  \beta\right)  },\\
Y_{\alpha,\beta,1}\left(  t,sh\right)   &  =\int_{sh}^{t}e_{\alpha,\alpha
}\left(  \Theta,\ln\frac{t}{r}\right)  \mathcal{A}_{1}Y_{0}\left(  \frac{r}%
{h},s\right)  \frac{dr}{r}=\frac{1}{\Gamma\left(  \alpha\right)  \Gamma\left(
\beta\right)  }\mathcal{A}_{1}\int_{sh}^{t}\left(  \ln\frac{t}{r}\right)
^{\alpha-1}\left(  \ln\frac{r}{sh}\right)  ^{\beta-1}\frac{dr}{r}\\
&  =\frac{1}{\Gamma\left(  \alpha\right)  \Gamma\left(  \beta\right)
}\mathcal{A}_{1}\left(  \ln\frac{t}{sh}\right)  ^{\alpha+\beta-1}%
\mathcal{B}\left[  \alpha,\beta\right]  =\frac{1}{\Gamma\left(  \alpha
+\beta\right)  }\mathcal{A}_{1}\left(  \ln\frac{t}{sh}\right)  ^{\alpha
+\beta-1},\\
Y_{\alpha,\beta,2}\left(  t,sh^{2}\right)   &  =\int_{sh^{2}}^{t}%
e_{\alpha,\alpha}\left(  \Theta,\ln\frac{t}{r}\right)  \mathcal{A}_{1}%
Y_{1}\left(  \frac{r}{h},sh\right)  dr=\frac{1}{\Gamma\left(  \alpha\right)
\Gamma\left(  \alpha+\beta\right)  }\mathcal{A}_{1}^{2}\int_{sh^{2}}%
^{t}\left(  \ln\frac{t}{r}\right)  ^{\alpha-1}\left(  \ln\frac{r}{sh^{2}%
}\right)  ^{\alpha+\beta-1}\frac{dr}{r}\\
&  =\frac{1}{\Gamma\left(  \alpha\right)  \Gamma\left(  \alpha+\beta\right)
}\mathcal{A}_{1}^{2}\left(  \ln\frac{t}{sh^{2}}\right)  ^{2\alpha+\beta
-1}\mathcal{B}\left[  \alpha,\alpha+\beta\right]  =\frac{1}{\Gamma\left(
2\alpha+\beta\right)  }\mathcal{A}_{1}^{2}\left(  \ln\frac{t}{sh^{2}}\right)
^{2\alpha+\beta-1},\\
Y_{\alpha,\beta,k}\left(  t,sh^{k}\right)   &  =\mathcal{A}_{1}^{k}\left(
\ln\frac{t}{sh^{k}}\right)  ^{k\alpha+\beta-1}\frac{1}{\Gamma\left(
k\alpha+\beta\right)  },\ \ k\geq0.
\end{align*}
So $Y_{h,\alpha,\beta}^{\mathcal{A}_{0},\mathcal{A}_{1}}\left(  t,1\right)  $
coincides with $E_{h,\alpha,\beta}^{\mathcal{A}_{1}}\left(  \ln t-\ln
h\right)  :$
\begin{align*}
Y_{h,\alpha,\beta}^{\mathcal{A}_{0},\mathcal{A}_{1}}\left(  t,1\right)   &  =%
{\displaystyle\sum\limits_{i=0}^{k}}
\mathcal{A}_{1}^{i}\left(  \ln\frac{t}{h^{i}}\right)  ^{i\alpha+\beta-1}%
\dfrac{1}{\Gamma\left(  i\alpha+\beta\right)  }=I\frac{\left(  \ln t\right)
^{\beta-1}}{\Gamma\left(  \beta\right)  }+\mathcal{A}_{1}\frac{\left(  \ln
t-\ln h\right)  ^{\alpha+\beta-1}}{\Gamma\left(  \alpha+\beta\right)
}+...+\mathcal{A}_{1}^{k}\frac{\left(  \ln t-k\ln h\right)  ^{k\alpha+\beta
-1}}{\Gamma\left(  k\alpha+\beta\right)  }\\
&  =E_{h,\alpha,\beta}^{\mathcal{A}_{1}}\left(  \ln t-\ln h\right)
,\ \ h^{k-1}<\frac{t}{h}\leq h^{k}.
\end{align*}

(ii) Trivially, from definition of $Y_{h,\alpha,\beta}^{\mathcal{A}%
_{0},\mathcal{A}_{1}}\left(  t,s\right)  $ we have: if $\mathcal{A}_{1}%
=\Theta$, then
\[
Y_{h,\alpha,\beta}^{\mathcal{A}_{0},\mathcal{A}_{1}}\left(  t,s\right)
=\left(  \ln\frac{t}{s}\right)  ^{\beta-1}E_{\alpha,\beta}\left(
\mathcal{A}_{0}\left(  \ln\frac{t}{s}\right)  ^{\alpha}\right)  .
\]

(iii) By (\ref{perf1}) for the case $\alpha=\beta=1$ and $\mathcal{A}%
_{0}\mathcal{A}_{1}=\mathcal{A}_{1}\mathcal{A}_{0}$, we have
\begin{align*}
Y_{h,1,1}^{\mathcal{A}_{0},\mathcal{A}_{1}}\left(  t,s\right)   &  =%
{\displaystyle\sum\limits_{i=0}^{\infty}}
\mathcal{A}_{0}^{i}\dfrac{\left(  \ln t-\ln s\right)  ^{i}}{\Gamma\left(
i+1\right)  }+%
{\displaystyle\sum\limits_{i=0}^{\infty}}
\left(
\begin{array}
[c]{c}%
i+1\\
1
\end{array}
\right)  \mathcal{A}_{0}^{i}\mathcal{A}_{1}\dfrac{\left(  \ln t-\ln sh\right)
^{i+1}}{\Gamma\left(  i+1\right)  }\\
&  +...+%
{\displaystyle\sum\limits_{i=0}^{\infty}}
\left(
\begin{array}
[c]{c}%
i+k\\
k
\end{array}
\right)  \mathcal{A}_{0}^{i}\mathcal{A}_{1}^{k}\dfrac{\left(  \ln t-\ln
sh^{k}\right)  ^{i+k}}{\Gamma\left(  i+k+1\right)  }\\
&  =e^{\mathcal{A}_{0}\left(  \ln t-\ln s\right)  }+e^{\mathcal{A}_{0}\left(
\ln t-\ln sh\right)  }\mathcal{A}_{1}\left(  \ln t-\ln sh\right)
+...+e^{\mathcal{A}_{0}\left(  \ln t-\ln sh^{k}\right)  }\mathcal{A}_{1}%
^{k}\frac{1}{k!}\left(  \ln t-k\ln h\right)  ^{k}\\
&  =e^{\mathcal{A}_{0}\left(  \ln t-\ln s\right)  }e_{h}^{\mathcal{A}%
_{11}\left(  \ln t-\ln h\right)  }.
\end{align*}

\end{proof}

It turns out that $Y_{h,\alpha,\beta}^{\mathcal{A}_{0},\mathcal{A}_{1}}\left(
t,s\right)  $ is a delayed perturbation of the Cauchy matrix with logarithm of
the homogeneous equation (\ref{de11}) with $f=0.$

\begin{lemma}
\label{lem:4}$Y_{h,\alpha,\beta}^{\mathcal{A}_{0},\mathcal{A}_{1}%
}:\mathbb{R\times R}\rightarrow\mathbb{R}^{n\times n}$ is a solution of
\begin{equation}
^{H}D_{h^{+}}^{\alpha}Y_{h,\alpha,\beta}^{\mathcal{A}_{0},\mathcal{A}_{1}%
}\left(  t,s\right)  =\left(  \ln\frac{t}{s}\right)  ^{-\alpha+\beta-1}%
\frac{1}{\Gamma\left(  -\alpha+\beta\right)  }+\mathcal{A}_{0}Y_{h,\alpha
,\beta}^{\mathcal{A}_{0},\mathcal{A}_{1}}\left(  t,s\right)  +\mathcal{A}%
_{1}Y_{h,\alpha,\beta}^{\mathcal{A}_{0},\mathcal{A}_{1}}\left(  \frac{t}%
{h},s\right)  . \label{form4}%
\end{equation}

\end{lemma}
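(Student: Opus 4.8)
\noindent\emph{Sketch of the argument.} The plan is to exploit the factorization ${}^{H}D_{h^{+}}^{\alpha}=\left(t\tfrac{d}{dt}\right)\circ{}^{H}I_{h^{+}}^{1-\alpha}$, valid for $\alpha\in(0,1)$, and to differentiate the series representation (\ref{exp1}) term by term. Fix $s$ and a point $t$ with $sh^{k}<t\le sh^{k+1}$; on this cell $Y_{h,\alpha,\beta}^{\mathcal{A}_{0},\mathcal{A}_{1}}(t,s)=\sum_{j=0}^{k}Y_{\alpha,\beta,j}(t,sh^{j})$ is a finite sum, and since each piece $Y_{\alpha,\beta,j}(\cdot,sh^{j})$ is supported in $[sh^{j},\infty)$, the Hadamard derivative based at $h$ coincides on the cell with the one based at $sh^{j}$; this lets me apply ${}^{H}I^{1-\alpha}$ and $t\tfrac{d}{dt}$ to a single summand at a time and add the results.

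For the leading summand I would expand $Y_{\alpha,\beta,0}(t,s)=e_{\alpha,\beta}(\mathcal{A}_{0};\ln\tfrac{t}{s})=\sum_{i\ge0}\mathcal{A}_{0}^{i}\frac{(\ln\frac{t}{s})^{i\alpha+\beta-1}}{\Gamma(i\alpha+\beta)}$ and apply, term by term, the formula ${}^{H}D^{\alpha}(\ln\tfrac{t}{s})^{i\alpha+\beta-1}=\frac{\Gamma(i\alpha+\beta)}{\Gamma(i\alpha+\beta-\alpha)}(\ln\tfrac{t}{s})^{i\alpha+\beta-\alpha-1}$ from the lemma on Hadamard derivatives of powers of the logarithm; isolating $i=0$ and reindexing $i\mapsto i+1$ in the remainder yields ${}^{H}D^{\alpha}Y_{\alpha,\beta,0}(t,s)=\frac{(\ln\frac{t}{s})^{\beta-\alpha-1}}{\Gamma(\beta-\alpha)}+\mathcal{A}_{0}\,Y_{\alpha,\beta,0}(t,s)$, which is exactly the inhomogeneous term of (\ref{form4}) together with its $\mathcal{A}_{0}$-part on the first piece. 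For $j\ge1$ the crucial input is identity (\ref{inq2}) of Lemma \ref{lem:2}, which gives ${}^{H}I^{1-\alpha}Y_{\alpha,\beta,j}(t,sh^{j})=\int_{sh^{j}}^{t}E_{\alpha,1}(\mathcal{A}_{0};\ln\tfrac{t}{r})\mathcal{A}_{1}Y_{\alpha,\beta,j-1}(\tfrac{r}{h},sh^{j-1})\tfrac{dr}{r}$. Differentiating this with $t\tfrac{d}{dt}$, the endpoint contribution is $E_{\alpha,1}(\mathcal{A}_{0};0)\,\mathcal{A}_{1}Y_{\alpha,\beta,j-1}(\tfrac{t}{h},sh^{j-1})=\mathcal{A}_{1}Y_{\alpha,\beta,j-1}(\tfrac{t}{h},sh^{j-1})$, while the identity $t\tfrac{d}{dt}E_{\alpha,1}(\mathcal{A}_{0};\ln\tfrac{t}{r})=\mathcal{A}_{0}\,e_{\alpha,\alpha}(\mathcal{A}_{0};\ln\tfrac{t}{r})$ turns the remaining integral, by the recursion (\ref{for1}), into $\mathcal{A}_{0}\,Y_{\alpha,\beta,j}(t,sh^{j})$. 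Hence ${}^{H}D^{\alpha}Y_{\alpha,\beta,j}(t,sh^{j})=\mathcal{A}_{0}\,Y_{\alpha,\beta,j}(t,sh^{j})+\mathcal{A}_{1}\,Y_{\alpha,\beta,j-1}(\tfrac{t}{h},sh^{j-1})$ for every $j\ge1$.

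Summing these $k+1$ relations over $j=0,\dots,k$ collapses the $\mathcal{A}_{0}$-terms to $\mathcal{A}_{0}\,Y_{h,\alpha,\beta}^{\mathcal{A}_{0},\mathcal{A}_{1}}(t,s)$ and, after the shift $j\mapsto j-1$, the $\mathcal{A}_{1}$-terms to $\mathcal{A}_{1}\sum_{i=0}^{k-1}Y_{\alpha,\beta,i}(\tfrac{t}{h},sh^{i})$; since $sh^{k}<t\le sh^{k+1}$ forces $sh^{k-1}<\tfrac{t}{h}\le sh^{k}$, the piecewise description in (\ref{exp1}) recognises this last sum as $Y_{h,\alpha,\beta}^{\mathcal{A}_{0},\mathcal{A}_{1}}(\tfrac{t}{h},s)$, so that, together with the inhomogeneous term from $j=0$, we recover (\ref{form4}) (recall $-\alpha+\beta-1=\beta-\alpha-1$). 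The main obstacle I anticipate is making the termwise manipulations rigorous: justifying the interchange of ${}^{H}I^{1-\alpha}$ and $t\tfrac{d}{dt}$ with the Mittag--Leffler series (which converge uniformly on compact subsets of the open cell), differentiating under the integral sign at the weakly singular lower endpoint $r=sh^{j}$, and bookkeeping the Heaviside cut-offs and the base-point replacement; once these analytic points are settled, the computation reduces to the elementary identities indicated above.
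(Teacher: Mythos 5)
Your proposal is correct and follows essentially the same route as the paper: apply $^{H}I^{1-\alpha}$ followed by $t\frac{d}{dt}$ to each summand $Y_{\alpha,\beta,j}$ on the cell $sh^{k}<t\le sh^{k+1}$, handle $j=0$ via the Hadamard derivative of log-powers (the paper instead first computes the fractional integral to get $e_{\alpha,1-\alpha+\beta}$ and then differentiates, which is the same calculation), use identity (\ref{inq2}) plus the endpoint term $E_{\alpha,1}(\mathcal{A}_{0};0)=I$ and $t\frac{d}{dt}E_{\alpha,1}(\mathcal{A}_{0};\ln\frac{t}{r})=\mathcal{A}_{0}e_{\alpha,\alpha}(\mathcal{A}_{0};\ln\frac{t}{r})$ for $j\ge1$, and sum. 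Your additional remarks on justifying termwise differentiation and the base-point of the derivative address points the paper glosses over, but do not change the argument.
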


\begin{proof}
According to (\ref{form1}) we have
\begin{align}
\left(  ^{H}D_{1^{+}}^{\alpha}Y_{\alpha,\beta,0}\left(  t,s\right)  \right)
\left(  t\right)   &  =\frac{1}{\Gamma\left(  1-\alpha\right)  }\left(
t\frac{d}{dt}\right)  \int_{s}^{t}\left(  \ln\frac{t}{r}\right)  ^{-\alpha
}Y_{\alpha,\beta,0}\left(  r,s\right)  \frac{dr}{r}\nonumber\\
&  =\frac{1}{\Gamma\left(  1-\alpha\right)  }\left(  t\frac{d}{dt}\right)
\int_{s}^{t}\left(  \ln\frac{t}{r}\right)  ^{-\alpha}e_{\alpha,\beta}\left(
r,s\right)  \frac{dr}{r}\nonumber\\
&  =\left(  t\frac{d}{dt}\right)  E_{\alpha,1-\alpha+\beta}\left(
\mathcal{A}_{0};\ln\frac{t}{s}\right)  \nonumber\\
&  =\left(  \ln\frac{t}{s}\right)  ^{-\alpha+\beta-1}\frac{1}{\Gamma\left(
-\alpha+\beta\right)  }+\mathcal{A}_{0}e_{\alpha,\beta}\left(  \mathcal{A}%
_{0};\ln\frac{t}{s}\right)  \nonumber\\
&  =\left(  \ln\frac{t}{s}\right)  ^{-\alpha+\beta-1}\frac{1}{\Gamma\left(
-\alpha+\beta\right)  }+\mathcal{A}_{0}Y_{\alpha,\beta,0}\left(  t,s\right)
.\label{form2}%
\end{align}
On the other hand for any $k\in\mathbb{N}$:%
\begin{align}
\left(  ^{H}D_{1^{+}}^{\alpha}Y_{\alpha,\beta,k}\left(  t,sh^{k}\right)
\right)  \left(  t\right)   &  =\frac{1}{\Gamma\left(  1-\alpha\right)
}\left(  t\frac{d}{dt}\right)  \int_{1}^{t}\left(  \ln\frac{t}{r}\right)
^{-\alpha}Y_{\alpha,\beta,k}\left(  r,s\right)  \frac{dr}{r}\nonumber\\
&  =\left(  t\frac{d}{dt}\right)  \int_{sh^{k}}^{t}E_{\alpha,1}\left(
\mathcal{A}_{0};\ln\frac{t}{r}\right)  \mathcal{A}_{1}Y_{\alpha,\beta
,k-1}\left(  \frac{r}{h},sh^{k-1}\right)  \frac{dr}{r}\nonumber\\
&  =\int_{sh^{k}}^{t}\left(  t\frac{d}{dt}\right)  E_{\alpha,1}\left(
\mathcal{A}_{0};\ln\frac{t}{r}\right)  \mathcal{A}_{1}Y_{\alpha,\beta
,k-1}\left(  \frac{r}{h},sh^{k-1}\right)  \frac{dr}{r}\nonumber\\
&  +\mathcal{A}_{1}Y_{\alpha,\beta,k-1}\left(  \frac{t}{h},sh^{k-1}\right)
\nonumber\\
&  =\mathcal{A}_{0}Y_{\alpha,\beta,k}\left(  t,sh^{k}\right)  +\mathcal{A}%
_{1}Y_{\alpha,\beta,k-1}\left(  \frac{t}{h},sh^{k-1}\right)  .\label{form3}%
\end{align}
From (\ref{form2}) and (\ref{form3}) it follows that for $sh^{k}<t\leq
sh^{k+1}$
\begin{align*}
^{H}D_{1^{+}}^{\alpha}Y_{h,\alpha,\beta}^{\mathcal{A}_{0},\mathcal{A}_{01}%
}\left(  t,s\right)   &  =\ ^{H}D_{1^{+}}^{\alpha}Y_{\alpha,\beta,0}\left(
t,s\right)  +\ ^{H}D_{1^{+}}^{\alpha}Y_{\alpha,\beta,1}\left(  t,sh\right)
+...+\ ^{H}D_{1^{+}}^{\alpha}Y_{\alpha,\beta,k}\left(  t,sh^{k}\right)  \\
&  =\left(  \ln\frac{t}{s}\right)  ^{-\alpha+\beta-1}\frac{1}{\Gamma\left(
-\alpha+\beta\right)  }+\mathcal{A}_{0}Y_{\alpha,\beta,0}\left(  t,s\right)
+\mathcal{A}_{0}Y_{\alpha,\beta,1}\left(  t,sh\right)  +\mathcal{A}%
_{1}Y_{\alpha,\beta,0}\left(  \frac{t}{h},s\right)  \\
&  +...+\mathcal{A}_{0}Y_{\alpha,\beta,k}\left(  t,sh^{k}\right)
+\mathcal{A}_{1}Y_{\alpha,\beta,k-1}\left(  \frac{t}{h},sh^{k-1}\right)  \\
&  =\left(  \ln\frac{t}{s}\right)  ^{-\alpha+\beta-1}\frac{1}{\Gamma\left(
-\alpha+\beta\right)  }++\mathcal{A}_{0}Y_{h,\alpha,\beta}^{\mathcal{A}%
_{0},\mathcal{A}_{01}}\left(  t,s\right)  +\mathcal{A}_{1}Y_{h,\alpha,\beta
}^{\mathcal{A}_{0},\mathcal{A}_{01}}\left(  \frac{t}{h},s\right)  .
\end{align*}
The proof is completed.
\end{proof}

\begin{theorem}
\label{thm:1}The solution $y(t)$ of (\ref{de11}) with zero initial condition
has a form
\[
y\left(  t\right)  =\int_{1}^{t}Y_{h,\alpha,\alpha}^{\mathcal{A}%
_{0},\mathcal{A}_{1}}\left(  t,s\right)  f\left(  s\right)  \frac{ds}%
{s},\ \ t\geq0.
\]

\end{theorem}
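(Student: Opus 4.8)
The plan is to verify directly that
\[
y(t):=\int_{1}^{t}Y_{h,\alpha,\alpha}^{\mathcal{A}_{0},\mathcal{A}_{1}}(t,s)\,f(s)\,\frac{ds}{s},\qquad y(t):=0\ \text{ for }\ \tfrac{1}{h}<t\le 1,
\]
satisfies the three relations of (\ref{de11}) with $\varphi\equiv 0$ and $a=0$. The matching $y=\varphi=0$ on $(\tfrac1h,1]$ is built in, so it remains to check the weighted initial condition $({}^{H}I_{1^{+}}^{1-\alpha}y)(1^{+})=0$ and the fractional differential equation on $(1,T]$. I would obtain both by first evaluating ${}^{H}I_{1^{+}}^{1-\alpha}y$ and then applying $t\tfrac{d}{dt}$, using ${}^{H}D_{1^{+}}^{\alpha}=(t\tfrac{d}{dt})\circ{}^{H}I_{1^{+}}^{1-\alpha}$ on $(0,1)$.

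First I would insert the definition of ${}^{H}I_{1^{+}}^{1-\alpha}$ and apply Fubini's theorem (legitimate because the worst singularity of $Y_{h,\alpha,\alpha}^{\mathcal{A}_{0},\mathcal{A}_{1}}(\tau,s)$ as $s\to\tau^{-}$ is the integrable factor $(\ln\tfrac{\tau}{s})^{\alpha-1}$, while $(\ln\tfrac{t}{\tau})^{-\alpha}$ is integrable and $f$ is continuous):
\[
{}^{H}I_{1^{+}}^{1-\alpha}y(t)=\int_{1}^{t}\Bigl[\tfrac{1}{\Gamma(1-\alpha)}\int_{s}^{t}\bigl(\ln\tfrac{t}{\tau}\bigr)^{-\alpha}Y_{h,\alpha,\alpha}^{\mathcal{A}_{0},\mathcal{A}_{1}}(\tau,s)\,\tfrac{d\tau}{\tau}\Bigr]f(s)\,\frac{ds}{s}=:\int_{1}^{t}G(t,s)\,f(s)\,\frac{ds}{s},
\]
so $G(t,s)={}^{H}I_{s^{+}}^{1-\alpha}Y_{h,\alpha,\alpha}^{\mathcal{A}_{0},\mathcal{A}_{1}}(\cdot,s)(t)$. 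For $s$ near $t$ only $Y_{\alpha,\alpha,0}(\cdot,s)$ contributes and, by the computation in (\ref{form1}) with $\beta=\alpha$, $G(t,s)=E_{\alpha,1}(\mathcal{A}_{0};\ln\tfrac{t}{s})\to I$ as $s\to t^{-}$; moreover $G$ is bounded on $\{1\le s\le t\le T\}$. Boundedness of $G$ gives ${}^{H}I_{1^{+}}^{1-\alpha}y(t)\to 0$ as $t\to 1^{+}$, which is the initial condition.

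Next I would differentiate. By the Leibniz rule for the weakly singular kernel $G$ (justified since $G$ is continuous up to $s=t$ with $G(t,t)=I$, and $t\,\partial_{t}G(t,s)={}^{H}D_{s^{+}}^{\alpha}Y_{h,\alpha,\alpha}^{\mathcal{A}_{0},\mathcal{A}_{1}}(\cdot,s)(t)=O((\ln\tfrac{t}{s})^{\alpha-1})$ near the diagonal, by Lemma \ref{lem:4}),
\[
{}^{H}D_{1^{+}}^{\alpha}y(t)=t\,\frac{d}{dt}\int_{1}^{t}G(t,s)f(s)\frac{ds}{s}=f(t)+\int_{1}^{t}{}^{H}D_{s^{+}}^{\alpha}Y_{h,\alpha,\alpha}^{\mathcal{A}_{0},\mathcal{A}_{1}}(\cdot,s)(t)\,f(s)\frac{ds}{s}.
\]
By Lemma \ref{lem:4} (for $\beta=\alpha$ the free term $\tfrac{1}{\Gamma(-\alpha+\beta)}(\ln\tfrac{t}{s})^{-\alpha+\beta-1}$ drops out, as $1/\Gamma(0)=0$) the integrand equals $\mathcal{A}_{0}Y_{h,\alpha,\alpha}^{\mathcal{A}_{0},\mathcal{A}_{1}}(t,s)+\mathcal{A}_{1}Y_{h,\alpha,\alpha}^{\mathcal{A}_{0},\mathcal{A}_{1}}(\tfrac{t}{h},s)$, whence
\[
{}^{H}D_{1^{+}}^{\alpha}y(t)=f(t)+\mathcal{A}_{0}\int_{1}^{t}Y_{h,\alpha,\alpha}^{\mathcal{A}_{0},\mathcal{A}_{1}}(t,s)f(s)\frac{ds}{s}+\mathcal{A}_{1}\int_{1}^{t}Y_{h,\alpha,\alpha}^{\mathcal{A}_{0},\mathcal{A}_{1}}\bigl(\tfrac{t}{h},s\bigr)f(s)\frac{ds}{s}.
\]
The first integral is $y(t)$; in the second, $Y_{h,\alpha,\alpha}^{\mathcal{A}_{0},\mathcal{A}_{1}}(\tfrac{t}{h},s)=\Theta$ whenever $s>\tfrac{t}{h}$ by (\ref{exp1}), and $y\equiv 0$ on $(\tfrac1h,1]$, so it equals $\int_{1}^{t/h}Y_{h,\alpha,\alpha}^{\mathcal{A}_{0},\mathcal{A}_{1}}(\tfrac{t}{h},s)f(s)\frac{ds}{s}=y(\tfrac{t}{h})$ (the integral being $0$ when $\tfrac{t}{h}\le 1$). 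Therefore ${}^{H}D_{1^{+}}^{\alpha}y(t)=\mathcal{A}_{0}y(t)+\mathcal{A}_{1}y(\tfrac{t}{h})+f(t)$ on $(1,T]$, as claimed.

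The main obstacle I anticipate is the differentiation-under-the-integral step: one must justify the Leibniz rule for the weakly singular $G$ and, above all, identify the boundary value $G(t,t)=\lim_{s\to t^{-}}{}^{H}I_{s^{+}}^{1-\alpha}Y_{h,\alpha,\alpha}^{\mathcal{A}_{0},\mathcal{A}_{1}}(\cdot,s)(t)=I$. This is exactly where the choice $\beta=\alpha$ is needed, through (\ref{form1}): it makes the order-$(1-\alpha)$ Hadamard integration cancel the order-$(\alpha-1)$ logarithmic singularity of $Y_{\alpha,\alpha,0}$ and leave a bounded matrix equal to the identity on the diagonal, so that the Leibniz boundary term is precisely $f(t)$. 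The piecewise delayed structure of $Y$ requires no separate case analysis in $t$, since Lemma \ref{lem:4} already accounts for the jump behaviour across all layers $sh^{k}<t\le sh^{k+1}$.
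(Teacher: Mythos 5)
Your proof is correct, and its computational core is the same as the paper's: apply Fubini to write $({}^{H}I_{1^{+}}^{1-\alpha}y)(t)=\int_{1}^{t}G(t,s)f(s)\frac{ds}{s}$ with $G(t,s)={}^{H}I_{s^{+}}^{1-\alpha}Y_{h,\alpha,\alpha}^{\mathcal{A}_{0},\mathcal{A}_{1}}(\cdot,s)(t)$, then differentiate, with the Leibniz boundary term $G(t,t)=I$ producing the density. The packaging, however, is genuinely different and in your favour. The paper runs an ansatz argument: it posits the representation (\ref{rp1}) with an unknown density $h$, assumes $y$ already solves (\ref{de11}), computes $({}^{H}D_{1^{+}}^{\alpha}y)(t)$ once from the equation and once from the representation via Lemma \ref{lem:2}, and identifies $h\equiv f$ — and it carries this out explicitly only on the first interval $1<t\leq h$, where $y(t/h)$ vanishes, so the delayed layers $Y_{\alpha,\alpha,k}$, $k\geq 1$, are never actually confronted. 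You instead verify directly that the candidate formula solves the problem on the whole of $(1,T]$, using Lemma \ref{lem:4} with $\beta=\alpha$ (where the free term dies because $1/\Gamma(0)=0$) to convert the interior term of the Leibniz rule into $\mathcal{A}_{0}y(t)+\mathcal{A}_{1}y(t/h)$, correctly truncating the delay integral at $t/h$ via the support property in (\ref{exp1}), and you check the vanishing initial condition from the boundedness of $G$. This proves the existence direction the theorem actually asserts, covers all layers $sh^{k}<t\leq sh^{k+1}$ uniformly, and makes explicit the two points the paper glosses over: the identification $G(t,t)=E_{\alpha,1}(\mathcal{A}_{0};0)=I$ forced by the choice $\beta=\alpha$ in (\ref{form1}), and the justification of differentiation under the integral for the weakly singular kernel. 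The only step I would ask you to spell out further is the continuity of $G(t,\cdot)$ up to the diagonal across the surfaces $t=sh^{k}$ (each layer ${}^{H}I_{s^{+}}^{1-\alpha}Y_{\alpha,\alpha,k}$ behaves like $(\ln\frac{t}{sh^{k}})^{k\alpha}$ there, so this is harmless), which is what legitimises the Leibniz rule you invoke.
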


\begin{proof}
Assume that any solution of nonhomogeneous system $y\left(  t\right)  $ has
the form
\begin{equation}
y\left(  t\right)  =\int_{1}^{t}Y_{h,\alpha,\alpha}^{\mathcal{A}%
_{0},\mathcal{A}_{1}}\left(  t,s\right)  h\left(  s\right)  \frac{ds}%
{s},\ \ t\geq0,\label{rp1}%
\end{equation}
where $h\left(  s\right)  ,$ $1\leq s\leq t\leq T$ is an unknown continuous
vector function and $y(1)=0$. Having Hadamard fractional differentiation on
both sides of (\ref{rp1}) , for $1<t\leq h$ we have
\begin{align*}
\left(  ^{H}D_{1^{+}}^{\alpha}y\right)  \left(  t\right)   &  =\mathcal{A}%
_{0}y\left(  t\right)  +\mathcal{A}_{1}y\left(  \frac{t}{h}\right)  +f\left(
t\right)  \\
&  =\mathcal{A}_{0}\int_{1}^{t}Y_{h,\alpha,\alpha}^{\mathcal{A}_{0}%
,\mathcal{A}_{1}}\left(  t,s\right)  h\left(  s\right)  \frac{ds}%
{s}+\mathcal{A}_{1}\int_{1}^{t/h}Y_{h,\alpha,\alpha}^{\mathcal{A}%
_{0},\mathcal{A}_{1}}\left(  \frac{t}{h},s\right)  h\left(  s\right)
\frac{ds}{s}+f\left(  t\right)  \\
&  =\mathcal{A}_{0}\int_{1}^{t}Y_{h,\alpha,\alpha}^{\mathcal{A}_{0}%
,\mathcal{A}_{1}}\left(  t,s\right)  h\left(  s\right)  \frac{ds}{s}+f\left(
t\right)  .
\end{align*}
On the other hand, according to Lemma \ref{lem:2}, we have
\begin{align*}
\left(  ^{H}D_{1^{+}}^{\alpha}y\right)  \left(  t\right)   &  =\frac{1}%
{\Gamma\left(  1-\alpha\right)  }\left(  t\frac{d}{dt}\right)  \int_{1}%
^{t}\left(  \ln\frac{t}{r}\right)  ^{-\alpha}\left(  \int_{1}^{r}%
Y_{h,\alpha,\alpha}^{\mathcal{A}_{0},\mathcal{A}_{1}}\left(  r,s\right)
h\left(  s\right)  \frac{ds}{s}\right)  \frac{dr}{r}\\
&  =\frac{1}{\Gamma\left(  1-\alpha\right)  }\left(  t\frac{d}{dt}\right)
\int_{1}^{t}\int_{s}^{t}\left(  \ln\frac{t}{r}\right)  ^{-\alpha}%
Y_{h,\alpha,\alpha}^{\mathcal{A}_{0},\mathcal{A}_{1}}\left(  r,s\right)
h\left(  s\right)  \frac{dr}{r}\frac{ds}{s}\\
&  =c\left(  t\right)  +\frac{1}{\Gamma\left(  1-\alpha\right)  }\int_{1}%
^{t}\left(  t\frac{d}{dt}\right)  \int_{s}^{t}\left(  \ln\frac{t}{r}\right)
^{-\alpha}Y_{h,\alpha,\alpha}^{\mathcal{A}_{0},\mathcal{A}_{1}}\left(
r,s\right)  h\left(  s\right)  \frac{dr}{r}\frac{ds}{s}\\
&  =h\left(  t\right)  +\mathcal{A}_{0}\int_{1}^{t}Y_{h,\alpha,\alpha
}^{\mathcal{A}_{0},\mathcal{A}}\left(  t,s\right)  h\left(  s\right)
\frac{dr}{r}.
\end{align*}
Therefore, $h(t)\equiv f(t)$. The proof is completed.
\end{proof}

\begin{theorem}
\label{thm:2}Let $p=0,1,...,l$. A solution $y\in C\left(  \left(  \left(
p-1\right)  h,ph\right]  ,\mathbb{R}^{n}\right)  $ of (\ref{de11}) with $f=0$
has a form
\[
y\left(  t\right)  =Y_{h,\alpha,\alpha}^{\mathcal{A}_{0},\mathcal{A}_{1}%
}\left(  t,\frac{1}{h}\right)  a+\int_{\frac{1}{h}}^{1}Y_{h,\alpha,\alpha
}^{\mathcal{A}_{0},\mathcal{A}_{1}}\left(  t,s\right)  \left(  \left(
^{H}D_{\frac{1}{h}^{+}}^{\alpha}\varphi\right)  \left(  s\right)
-\mathcal{A}_{0}\varphi\left(  s\right)  \right)  \frac{ds}{s}.
\]

\end{theorem}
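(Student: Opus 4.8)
\emph{Proof idea.} The plan is to verify directly that the function
\[
y(t)=Y_{h,\alpha,\alpha}^{\mathcal{A}_{0},\mathcal{A}_{1}}\!\left(t,\tfrac1h\right)a+\int_{\frac1h}^{1}Y_{h,\alpha,\alpha}^{\mathcal{A}_{0},\mathcal{A}_{1}}(t,s)\,\psi(s)\,\frac{ds}{s},\qquad \psi(s):=\bigl(^{H}D_{\frac1h^{+}}^{\alpha}\varphi\bigr)(s)-\mathcal{A}_{0}\varphi(s),
\]
meets the three requirements of (\ref{de11}) with $f=0$: it agrees with the prescribed history $\varphi$ on $\left(\tfrac1h,1\right]$, it satisfies the weighted (fractional-integral) initial condition, and it solves the fractional delay equation on $(1,T]$. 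The differential-equation part turns out to be essentially uniform in $t$; the history-matching and the regularity assertion are checked on the intervals $(h^{p-1},h^{p}]$, $p=0,1,\dots,l$ (the intervals of the statement, on each of which $Y_{h,\alpha,\alpha}^{\mathcal{A}_{0},\mathcal{A}_{1}}(t,\cdot)$ and $Y_{h,\alpha,\alpha}^{\mathcal{A}_{0},\mathcal{A}_{1}}(t,\tfrac1h)$ are finite sums of the blocks $Y_{\alpha,\alpha,j}$ of Definition~\ref{def:22}); since the only singularities of these blocks in $s$ are integrable and located at the nodes $t=sh^{j}$, dominated convergence gives $y\in C\bigl((h^{p-1},h^{p}],\mathbb{R}^{n}\bigr)$ once the equation is known.

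For the base interval $p=0$, i.e.\ $\tfrac1h<t\le1$, every $s$ entering the integral satisfies $\tfrac1h<s\le t\le1<sh$, so by Definition~\ref{def:22} only the $j=0$ block survives and $Y_{h,\alpha,\alpha}^{\mathcal{A}_{0},\mathcal{A}_{1}}(t,s)$ collapses to the ordinary kernel $e_{\alpha,\alpha}\bigl(\mathcal{A}_{0};\ln\tfrac ts\bigr)$ (Lemma~\ref{lem:3}(ii)); similarly $Y_{h,\alpha,\alpha}^{\mathcal{A}_{0},\mathcal{A}_{1}}(t,\tfrac1h)=e_{\alpha,\alpha}\bigl(\mathcal{A}_{0};\ln(th)\bigr)$. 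Hence on $\left(\tfrac1h,1\right]$ the candidate is exactly the classical Hadamard variation-of-constants representation (the one recalled in the Introduction from \cite{kst,wang5}) of the linear Cauchy problem $^{H}D_{\frac1h^{+}}^{\alpha}w=\mathcal{A}_{0}w+\psi$ with weighted initial value $a$. Since, by the definition of $\psi$, the history $\varphi$ itself solves this same linear problem, uniqueness forces $y=\varphi$ on $\left(\tfrac1h,1\right]$, and the weighted initial condition is then inherited from $\varphi$; alternatively one checks it directly, using that $^{H}I^{1-\alpha}$ applied to the leading term $(\ln\tfrac ts)^{\alpha-1}E_{\alpha,\alpha}(\mathcal{A}_{0};\ln\tfrac ts)$ is continuous up to the base point with value the identity there, which follows from the Gamma-ratio scaling identities of Section~2 and Definition~\ref{def:01}.

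For the equation on $(1,T]$, apply $^{H}D_{1^{+}}^{\alpha}$ to the formula. The decisive input is Lemma~\ref{lem:4} specialised to $\beta=\alpha$: the ``free'' term $(\ln\tfrac ts)^{\beta-\alpha-1}/\Gamma(\beta-\alpha)$ then becomes $(\ln\tfrac ts)^{-1}/\Gamma(0)=0$, so
\[
^{H}D_{1^{+}}^{\alpha}Y_{h,\alpha,\alpha}^{\mathcal{A}_{0},\mathcal{A}_{1}}(t,s)=\mathcal{A}_{0}\,Y_{h,\alpha,\alpha}^{\mathcal{A}_{0},\mathcal{A}_{1}}(t,s)+\mathcal{A}_{1}\,Y_{h,\alpha,\alpha}^{\mathcal{A}_{0},\mathcal{A}_{1}}\!\left(\tfrac th,s\right).
\]
The $a$-term is differentiated by this identity directly; for the integral term I would first write $^{H}D_{1^{+}}^{\alpha}=\tfrac1{\Gamma(1-\alpha)}\bigl(t\tfrac{d}{dt}\bigr)\circ{}^{H}I_{1^{+}}^{1-\alpha}$, interchange the inner $r$-integration with the outer $s$-integration (Fubini, exactly as in the proof of Lemma~\ref{lem:2}, formula (\ref{inq2}), and in Theorem~\ref{thm:1}), then carry out the $t\tfrac{d}{dt}$ differentiation and substitute the displayed identity under the integral sign. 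Collecting terms, the $\mathcal{A}_{0}$-contributions reassemble into $\mathcal{A}_{0}y(t)$ and the $\mathcal{A}_{1}$-contributions into
\[
\mathcal{A}_{1}\!\left[Y_{h,\alpha,\alpha}^{\mathcal{A}_{0},\mathcal{A}_{1}}\!\left(\tfrac th,\tfrac1h\right)a+\int_{\frac1h}^{1}Y_{h,\alpha,\alpha}^{\mathcal{A}_{0},\mathcal{A}_{1}}\!\left(\tfrac th,s\right)\psi(s)\,\frac{ds}{s}\right]=\mathcal{A}_{1}\,y\!\left(\tfrac th\right),
\]
the bracket being the candidate evaluated at $\tfrac th$ (which, for $1<t\le h$, equals $\varphi(\tfrac th)$ by the base interval, consistently with (\ref{de11})). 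This is precisely (\ref{de11}) with $f=0$.

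The step I expect to be the main obstacle is the interchange of the nonlocal operator $^{H}D_{1^{+}}^{\alpha}$ with the $s$-integration over the history parameter while simultaneously controlling the integrable singularities of the blocks $Y_{\alpha,\alpha,j}(t,\cdot)$ at their nodes and of $\psi$ at $s\to\tfrac1h^{+}$: one cannot differentiate naively under the integral sign and must instead reuse the Fubini and beta-function bookkeeping packaged in Lemma~\ref{lem:2}. A secondary nuisance is keeping track of which index $p$ governs the piecewise formulas for $Y_{h,\alpha,\alpha}^{\mathcal{A}_{0},\mathcal{A}_{1}}(t,s)$ and $Y_{h,\alpha,\alpha}^{\mathcal{A}_{0},\mathcal{A}_{1}}(\tfrac th,s)$ as $t$ crosses the breakpoints $t=h^{p}$, so that the delayed term $y(\tfrac th)$ on the right-hand side is always read off the correct, already-verified branch.
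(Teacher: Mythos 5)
Your argument is correct and uses the same key ingredients as the paper --- the collapse of $Y_{h,\alpha,\alpha}^{\mathcal{A}_{0},\mathcal{A}_{1}}(t,s)$ to the single block $e_{\alpha,\alpha}(\mathcal{A}_{0};\ln\frac{t}{s})$ when $\frac{1}{h}<s<t\le 1$, and the identity $({}^{H}D_{\frac{1}{h}^{+}}^{\alpha}\varphi)(t)=\mathcal{A}_{0}\varphi(t)+g(t)$ --- but it runs them in the opposite direction. The paper makes an ansatz with an unknown constant $c$ and an unknown density $g$, imposes the weighted initial value (getting $c=a$) and the history condition $y=\varphi$ on $(\frac{1}{h},1]$, and then Hadamard-differentiates the resulting identity (\ref{q1}) to read off $g={}^{H}D_{\frac{1}{h}^{+}}^{\alpha}\varphi-\mathcal{A}_{0}\varphi$; it never explicitly checks that the resulting formula satisfies the delay equation on $(1,T]$, leaving that to Lemma \ref{lem:4}. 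You instead fix $g=\psi$ from the start and verify all three requirements: the history matching is obtained from uniqueness of the classical (non-delayed) Hadamard linear problem rather than by differentiation, and the equation on $(1,T]$ is checked explicitly via Lemma \ref{lem:4} with $\beta=\alpha$ (where the free term dies because $1/\Gamma(0)=0$) together with the Fubini bookkeeping of Lemma \ref{lem:2}. Your version is in this respect the more complete one, since the verification of the equation on $(1,T]$ is actually carried out rather than presupposed. The one point you share with the paper and should make explicit is the compatibility assumption that $\varphi$ itself carries the weighted initial value $a$ at $\frac{1}{h}^{+}$: without it your uniqueness step (equivalently, the paper's identity (\ref{q1}) in the limit $t\to\frac{1}{h}^{+}$) does not close, since $y$ and $\varphi$ would then solve the same classical problem with different initial data.
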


\begin{proof}
We are looking for a solution, which depends on an unknown constant $c,$ and
an Hadamard differentiable vector function $g\left(  t\right)  ,$ of the form
\[
y\left(  t\right)  =Y_{h,\alpha,\alpha}^{\mathcal{A}_{0},\mathcal{A}_{1}%
}\left(  t,\frac{1}{h}\right)  c+\int_{\frac{1}{h}}^{1}Y_{h,\alpha,\alpha
}^{\mathcal{A}_{0},\mathcal{A}_{1}}\left(  t,s\right)  g\left(  s\right)
\frac{ds}{s},
\]
Moreover, $y\left(  t\right)  $ satisfies initial conditions
\begin{align*}
y\left(  t\right)   &  =Y_{h,\alpha,\alpha}^{\mathcal{A}_{0},\mathcal{A}_{1}%
}\left(  t,\frac{1}{h}\right)  c+\int_{\frac{1}{h}}^{1}Y_{h,\alpha,\alpha
}^{\mathcal{A}_{0},\mathcal{A}_{1}}\left(  t,s\right)  g\left(  s\right)
\frac{ds}{s}:=\varphi\left(  t\right)  ,\ \ \frac{1}{h}<t\leq1,\\
\left(  ^{H}I_{\frac{1}{h}^{+}}^{1-\alpha}y\right)  \left(  \frac{1}{h}%
^{+}\right)   &  =a.
\end{align*}

We have
\begin{align*}
a  &  =\left(  ^{H}I_{\frac{1}{h}^{+}}^{1-\alpha}y\right)  \left(  \frac{1}%
{h}^{+}\right)  =\lim_{t\rightarrow\frac{1}{h}^{+}}\left(  ^{H}I_{\frac{1}%
{h}^{+}}^{1-\alpha}y\right)  \left(  t\right) \\
&  =\lim_{t\rightarrow\frac{1}{h}^{+}}\left(  \frac{1}{\Gamma\left(
1-\alpha\right)  }\int_{\frac{1}{h}}^{t}\left(  \ln t-\ln s\right)  ^{-\alpha
}Y_{h,\alpha,\alpha}^{\mathcal{A}_{0},\mathcal{A}_{1}}\left(  t,\frac{1}%
{h}\right)  c\frac{dt}{t}\right) \\
&  =\lim_{t\rightarrow\frac{1}{h}^{+}}\left(  \frac{1}{\Gamma\left(
1-\alpha\right)  }\int_{\frac{1}{h}}^{t}\left(  \ln t-\ln s\right)  ^{-\alpha
}e_{\alpha,\alpha}\left(  \mathcal{A}_{0},\ln t\right)  c\frac{dt}{t}\right)
=c.
\end{align*}

Thus $c=a$. Since $\frac{1}{h}<t\leq1$, we obtain that
\[
Y_{h,\alpha,\alpha}^{\mathcal{A}_{0},\mathcal{A}_{1}}\left(  t,s\right)
=\left\{
\begin{tabular}
[c]{ll}%
$\left(  \ln\frac{t}{s}\right)  ^{\alpha-1}E_{\alpha,\alpha}\left(
\mathcal{A}_{0}\left(  \ln\frac{t}{s}\right)  ^{\alpha}\right)  ,$ & $\frac
{1}{h}\leq s<t\leq1,\ $\\
$\Theta,$ & $t<s\leq h.$%
\end{tabular}
\ \ \ \ \ \right.
\]
Consequently, on interval $\frac{1}{h}<t\leq1$, we can easily derive
\begin{align}
\varphi\left(  t\right)   &  =Y_{h,\alpha,\alpha}^{\mathcal{A}_{0}%
,\mathcal{A}_{1}}\left(  t,\frac{1}{h}\right)  a+\int_{\frac{1}{h}}%
^{1}Y_{h,\alpha,\alpha}^{\mathcal{A}_{0},\mathcal{A}_{1}}\left(  t,s\right)
g\left(  s\right)  \frac{ds}{s}\label{q1}\\
&  =Y_{h,\alpha,\alpha}^{\mathcal{A}_{0},\mathcal{A}_{1}}\left(  t,\frac{1}%
{h}\right)  a+\int_{\frac{1}{h}}^{t}Y_{h,\alpha,\alpha}^{\mathcal{A}%
_{0},\mathcal{A}_{1}}\left(  t,s\right)  g\left(  s\right)  \frac{ds}{s}%
+\int_{t}^{1}Y_{h,\alpha,\alpha}^{\mathcal{A}_{0},\mathcal{A}_{1}}\left(
t,s\right)  g\left(  s\right)  \frac{ds}{s}\nonumber\\
&  =\left(  \ln th\right)  ^{\alpha-1}E_{\alpha,\alpha}\left(  \mathcal{A}%
_{0}\left(  \ln th\right)  ^{\alpha}\right)  a+\int_{\frac{1}{h}}^{t}\left(
\ln\frac{t}{s}\right)  ^{\alpha-1}E_{\alpha,\alpha}\left(  \mathcal{A}%
_{0}\left(  \ln\frac{t}{s}\right)  ^{\alpha}\right)  g\left(  s\right)
\frac{ds}{s}.\nonumber
\end{align}
Having differentiated (\ref{q1}) in Hadamard sense, we obtain
\begin{align*}
\left(  ^{H}D_{\frac{1}{h}^{+}}^{\alpha}\varphi\right)  \left(  t\right)   &
=\mathcal{A}_{0}\left(  \ln th\right)  ^{\alpha-1}E_{\alpha,\alpha}\left(
\mathcal{A}_{0}\left(  \ln th\right)  ^{\alpha}\right)  a+\mathcal{A}_{0}%
\int_{\frac{1}{h}}^{t}\left(  \ln\frac{t}{s}\right)  ^{\alpha-1}%
E_{\alpha,\alpha}\left(  \mathcal{A}_{0}\left(  \ln\frac{t}{s}\right)
^{\alpha}\right)  g\left(  s\right)  \frac{ds}{s}+g\left(  t\right) \\
&  =\mathcal{A}_{0}\varphi\left(  t\right)  +g\left(  t\right)  .
\end{align*}
Therefore, $g\left(  t\right)  =\left(  ^{H}D_{1^{+}}^{\alpha}\varphi\right)
\left(  t\right)  -\mathcal{A}_{0}\varphi\left(  t\right)  $ and the desired
formula holds.
\end{proof}

Combining Theorems \ref{thm:1} and \ref{thm:2} together we get the following result.

\begin{corollary}
A solution $y\in C\left(  \left[  1,T\right]  \cap\left(  h^{p-1}%
,h^{p}\right]  ,\mathbb{R}^{n}\right)  $ of (\ref{de11}) has a form
\begin{align*}
y\left(  t\right)   &  =Y_{h,\alpha,\alpha}^{\mathcal{A}_{0},\mathcal{A}_{1}%
}\left(  t,\frac{1}{h}\right)  a+\int_{\frac{1}{h}}^{1}Y_{h,\alpha,\alpha
}^{\mathcal{A}_{0},\mathcal{A}_{1}}\left(  t,s\right)  \left[  \left(
^{H}D_{\frac{1}{h}^{+}}^{\alpha}\varphi\right)  \left(  s\right)
-\mathcal{A}_{0}\varphi\left(  s\right)  \right]  \frac{ds}{s}\\
&  +\int_{1}^{t}Y_{h,\alpha,\alpha}^{\mathcal{A}_{0},\mathcal{A}_{1}}\left(
t,s\right)  f\left(  s\right)  \frac{ds}{s}.
\end{align*}

\end{corollary}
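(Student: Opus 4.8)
The plan is to exploit the linearity of the Hadamard fractional delay operator in (\ref{de11}) and superpose the two solution formulas already established. First I would write $y=y_1+y_2$, where $y_1$ solves the homogeneous problem (\ref{de11}) with $f\equiv 0$ but carrying the prescribed history $\varphi$ on $(\tfrac1h,1]$ together with the prescribed value $a$ of the weighted Hadamard fractional integral at the initial point, and $y_2$ solves (\ref{de11}) with the given forcing $f$ but with zero history and zero initial data. Since $^H D_{1^+}^\alpha$ and the maps $y\mapsto\mathcal{A}_0 y(t)$ and $y\mapsto\mathcal{A}_1 y(t/h)$ are all linear, $y_1+y_2$ satisfies $^H D_{1^+}^\alpha y=\mathcal{A}_0 y(t)+\mathcal{A}_1 y(t/h)+f(t)$ on $(1,T]$; and because $y_2$ vanishes identically on $(\tfrac1h,1]$ (its defining integral $\int_1^t Y f\,\tfrac{ds}{s}$ is empty for $t\le1$) while its weighted fractional integral at the initial point is zero, the history and initial conditions of (\ref{de11}) are met by $y_1+y_2$ precisely because they are met by $y_1$ alone.

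Next I would substitute the closed forms. Theorem \ref{thm:2} gives
\[
y_1(t)=Y_{h,\alpha,\alpha}^{\mathcal{A}_0,\mathcal{A}_1}\!\left(t,\tfrac1h\right)a+\int_{\frac1h}^1 Y_{h,\alpha,\alpha}^{\mathcal{A}_0,\mathcal{A}_1}(t,s)\left[\left(^H D_{\frac1h^+}^\alpha\varphi\right)(s)-\mathcal{A}_0\varphi(s)\right]\frac{ds}{s},
\]
while Theorem \ref{thm:1} gives $y_2(t)=\int_1^t Y_{h,\alpha,\alpha}^{\mathcal{A}_0,\mathcal{A}_1}(t,s)f(s)\,\tfrac{ds}{s}$. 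Adding the two expressions yields exactly the formula claimed in the Corollary. I would then note that the asserted regularity $y\in C([1,T]\cap(h^{p-1},h^p],\mathbb{R}^n)$ on each delay block follows from the block-wise structure of $Y_{h,\alpha,\alpha}^{\mathcal{A}_0,\mathcal{A}_1}$ in (\ref{exp1}) — a finite sum of terms continuous in $t$ — together with the continuity of $\varphi$, $^H D_{\frac1h^+}^\alpha\varphi$ and $f$, so that each of the three terms is continuous on the indicated interval.

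The only point needing care — rather than a genuine obstacle — is the consistency check on the overlap $(\tfrac1h,1]$: there one invokes $Y_{h,\alpha,\alpha}^{\mathcal{A}_0,\mathcal{A}_1}(t,s)=\Theta$ for $t<s$, so the integral $\int_{1/h}^1$ in $y_1$ collapses to $\int_{1/h}^t$ and, exactly as computed inside the proof of Theorem \ref{thm:2}, reproduces $\varphi(t)$; simultaneously $\int_1^t Y f\,\tfrac{ds}{s}=0$ on that interval, so $y=\varphi$ on the history segment with no double counting. With this the superposition is legitimate and the Corollary is immediate.
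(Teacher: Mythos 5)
Your proposal is correct and matches the paper's own (very terse) argument: the paper simply states that the Corollary follows by ``combining Theorems \ref{thm:1} and \ref{thm:2}'', which is exactly the linear superposition $y=y_1+y_2$ you carry out, with your additional checks on the history interval and continuity being a welcome but inessential elaboration.
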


\section{Existence Uniqueness and Stability}

In this section, we consider the following equivalent integral form of the
nonlinear Cauchy problem for fractional time-delay differential equations with
Hadamard derivative (\ref{de2}):%
\begin{align}
y\left(  t\right)   &  =Y_{h,\alpha,\alpha}^{\mathcal{A}_{0},\mathcal{A}_{1}%
}\left(  t,\frac{1}{h}\right)  a+\int_{\frac{1}{h}}^{1}Y_{h,\alpha,\alpha
}^{\mathcal{A}_{0},\mathcal{A}_{1}}\left(  t,s\right)  \left[  \left(
^{H}D_{\frac{1}{h}^{+}}^{\alpha}\varphi\right)  \left(  s\right)
-\mathcal{A}_{0}\varphi\left(  s\right)  \right]  \frac{ds}{s}\nonumber\\
&  +\int_{1}^{t}Y_{h,\alpha,\alpha}^{\mathcal{A}_{0},\mathcal{A}_{1}}\left(
t,s\right)  f\left(  s,y\left(  s\right)  \right)  \frac{ds}{s}. \label{ine1}%
\end{align}

Let us introduce the conditions under which existence and uniqueness of the
integral equation $\left(  \ref{ine1}\right)  $ will be investigated.

\begin{enumerate}
\item[(A1)] $f:\left[  1,T\right]  \times\mathbb{R}\rightarrow\mathbb{R}$ be a
function such that $f\left(  t,y\right)  \in C_{\gamma,\ln}\left[  1,T\right]
$ with $\gamma<\alpha$ for any $y\in\mathbb{R}^{n};$

\item[(A2)] There exists a positive constant $L_{f}>0$ such that
\[
\left\Vert f\left(  t,y_{1}\right)  -f\left(  t,y_{2}\right)  \right\Vert \leq
L_{f}\left\Vert y_{1}-y_{2}\right\Vert ,
\]
for each $\left(  t,y_{1}\right)  ,\left(  t,y_{2}\right)  \in\left[
1,T\right]  \times\mathbb{R}^{n}$.
\end{enumerate}

From (A1) and (A2), it follows that
\[
\left\Vert f\left(  t,y\right)  \right\Vert \leq L_{f}\left\Vert y\right\Vert
+L_{2}\ \ \ \ \text{for some }L_{2}>0.
\]

To prove existence uniqueness and stability of (\ref{ine1}) we use the
following estimation of $Y_{\alpha,\beta}^{\mathcal{A}_{0},\mathcal{A}_{01}%
}\left(  t,s\right)  .$

\begin{lemma}
\label{lem:5}We have for $sh^{p}<t\leq sh^{p+1},$ $p=0,1,...,$
\[
\left\Vert Y_{h,\alpha,\beta}^{\mathcal{A}_{0},\mathcal{A}_{1}}\left(
t,s\right)  \left(  t,s\right)  \right\Vert \leq Y_{h,\alpha,\beta
}^{\left\Vert \mathcal{A}_{0}\right\Vert ,\left\Vert \mathcal{A}%
_{1}\right\Vert }\left(  t,s\right)  \leq Y_{1,\alpha,\beta}^{\left\Vert
\mathcal{A}_{0}\right\Vert ,\left\Vert \mathcal{A}_{1}\right\Vert }\left(
t,1\right)  .
\]

\end{lemma}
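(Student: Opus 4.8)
The plan is to establish the two inequalities of Lemma \ref{lem:5} separately. Put $a:=\|\mathcal{A}_0\|$, $b:=\|\mathcal{A}_1\|$, and let $y_{\alpha,\beta,j}(\cdot,\cdot)$ be the \emph{scalar} functions obtained from the recursion (\ref{for1}) by replacing $\mathcal{A}_0$ with $a$ and $\mathcal{A}_1$ with $b$; every factor appearing there — the powers $(\ln\frac{t}{\,\cdot\,})^{\gamma}$, the weights $1/\Gamma(\,\cdot\,)$, and the scalars $a^{k},b^{k}$ — is nonnegative for the relevant arguments, so each $y_{\alpha,\beta,j}(t,\sigma)\ge 0$ when $t>\sigma>0$. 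For the first inequality I would show, by induction on $j$, that $\|Y_{\alpha,\beta,j}(t,sh^{j})\|\le y_{\alpha,\beta,j}(t,sh^{j})$ for all $j\in\mathbb{N}\cup\{0\}$. The base case $j=0$ follows from Definition \ref{def:01} together with $\|\mathcal{A}_0^{k}\|\le a^{k}$ and $\ln\frac{t}{s}>0$. For the inductive step I would insert, inside (\ref{for1}), the integral triangle inequality $\|\int f\|\le\int\|f\|$, submultiplicativity $\|\mathcal{A}_1 X\|\le b\|X\|$, and the majorization $\|e_{\alpha,\alpha}(\mathcal{A}_0;x)\|\le e_{\alpha,\alpha}(a;x)$ (the same termwise series bound), then apply the induction hypothesis and use that the resulting scalar integrand is nonnegative, so enlarging it under the integral sign preserves the estimate. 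Summing the finitely many terms that appear in (\ref{exp1}) on an interval $sh^{p}<t\le sh^{p+1}$ and using the triangle inequality once more then gives $\|Y_{h,\alpha,\beta}^{\mathcal{A}_0,\mathcal{A}_1}(t,s)\|\le Y_{h,\alpha,\beta}^{a,b}(t,s)$.

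For the second inequality the key observation is that $a$ and $b$, being scalars, commute, so Lemma \ref{lem:1} applies and $Y_{h,\alpha,\beta}^{a,b}$ has the closed series form (\ref{perf1}); the target $Y_{1,\alpha,\beta}^{a,b}(t,1)$ is then, reading off the series in (\ref{exp1}), exactly that same double series with every delay shift $h^{q}$ deleted and the index $q$ allowed to run over all of $\mathbb{N}\cup\{0\}$. Using $\sum_{q=0}^{i}\binom{i}{q}a^{i-q}b^{q}=(a+b)^{i}$ one sees $Y_{1,\alpha,\beta}^{a,b}(t,1)=e_{\alpha,\beta}(a+b;\ln t)<\infty$, so there is no convergence obstruction and, all terms being nonnegative, the double series may be freely rearranged (Tonelli). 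I would then discard from $Y_{1,\alpha,\beta}^{a,b}(t,1)$ the nonnegative tail with $q>p$ and compare what remains with $Y_{h,\alpha,\beta}^{a,b}(t,s)$ term by term: the coefficients $\binom{i}{q}a^{i-q}b^{q}/\Gamma(i\alpha+\beta)$ coincide, while $0<\ln\frac{t}{sh^{q}}\le\ln t$ (this is where one uses $sh^{q}\ge 1$, valid on the range over which the lemma is invoked) together with the monotonicity of $x\mapsto x^{\,i\alpha+\beta-1}$ on $(0,\infty)$ for nonnegative exponent gives $(\ln\frac{t}{sh^{q}})^{i\alpha+\beta-1}\le(\ln t)^{i\alpha+\beta-1}$ for every $i\ge 1$; the one index with a possibly negative exponent, $i=0$ (forcing $q=0$, exponent $\beta-1$), contributes identically on both sides.

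The bulk of the work — juggling the double sums and swapping summations with the integrations in (\ref{for1}) — is routine precisely because every summand and integrand in sight is nonnegative, so monotone convergence handles all interchanges. The step I expect to need the most care is the termwise comparison in the second part: one has to isolate the single exponent $i\alpha+\beta-1$ that need not be nonnegative (the $i=0$ term) and be explicit about the hypothesis $sh^{q}\ge 1$ that underlies $\ln\frac{t}{sh^{q}}\le\ln t$. Once those two points are settled the term-by-term domination is immediate, and summing over $q$ and $i$ closes the argument.
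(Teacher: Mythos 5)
Your strategy is essentially the paper's: majorize termwise by the scalar function $Y_{h,\alpha,\beta}^{\left\Vert \mathcal{A}_{0}\right\Vert ,\left\Vert \mathcal{A}_{1}\right\Vert }$, use the commutative closed form of Lemma \ref{lem:1} (valid for scalars), and then replace each $\ln \frac{t}{sh^{k}}$ by $\ln t$. You are in fact more careful than the paper on the first inequality: the paper writes only ``Indeed'' and immediately displays the scalar double series, whereas the matrices $Y_{\alpha ,\beta ,k}$ are defined by the nested integrals (\ref{for1}), so one genuinely needs your induction on $j$ (triangle inequality under the integral, submultiplicativity, nonnegativity of the scalar integrand) to get $\left\Vert Y_{\alpha ,\beta ,j}(t,sh^{j})\right\Vert \leq y_{\alpha ,\beta ,j}(t,sh^{j})$; your identification $Y_{1,\alpha ,\beta }^{a,b}(t,1)=e_{\alpha ,\beta }(a+b;\ln t)$ is also a nice addition not in the paper. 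The one step that does not hold as you state it is the treatment of the $i=0$ term: its contributions are $(\ln \frac{t}{s})^{\beta -1}/\Gamma (\beta )$ on the left and $(\ln t)^{\beta -1}/\Gamma (\beta )$ on the right, and these are ``identical'' only when $s=1$. For $s<1$ the comparison still goes the right way because $x\mapsto x^{\beta -1}$ is decreasing when $\beta <1$, but for $s>1$ and $\beta <1$ that single term reverses the inequality (it even blows up as $t\downarrow s$ while the right-hand side stays bounded), so no termwise argument can close it there. This is a defect you inherited rather than introduced --- the paper's own proof silently applies the bound $(\ln t-\ln sh^{k})^{n\alpha +k\alpha +\beta -1}\leq (\ln t)^{n\alpha +k\alpha +\beta -1}$ to an exponent that is negative when $n=k=0$ and $\beta <1$, and the lemma is later invoked for $s$ on both sides of $1$ --- but your explicit claim of equality should be replaced by a case distinction on the sign of the exponent together with an explicit hypothesis on the position of $s$ relative to $1$.
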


\begin{proof}
Indeeed,
\begin{align*}
\left\Vert Y_{h,\alpha,\beta}^{\mathcal{A}_{0},\mathcal{A}_{1}}\left(
t,s\right)  \right\Vert  &  \leq Y_{h,\alpha,\beta}^{\left\Vert \mathcal{A}%
_{0}\right\Vert ,\left\Vert \mathcal{A}_{1}\right\Vert }\left(  t,s\right)
=\sum_{k=0}^{p}%
{\displaystyle\sum\limits_{n=0}^{\infty}}
\left(
\begin{array}
[c]{c}%
n+k\\
k
\end{array}
\right)  \left\Vert \mathcal{A}_{1}\right\Vert ^{k}\left\Vert \mathcal{A}%
_{0}\right\Vert ^{n}\frac{\left(  \ln t-\ln sh^{k}\right)  ^{n\alpha
+k\alpha+\beta-1}}{\Gamma\left(  n\alpha+k\alpha+\beta\right)  }\\
&  \leq\sum_{k=0}^{p}%
{\displaystyle\sum\limits_{n=0}^{\infty}}
\left(
\begin{array}
[c]{c}%
n+k\\
k
\end{array}
\right)  \left\Vert \mathcal{A}_{1}\right\Vert ^{k}\left\Vert \mathcal{A}%
_{0}\right\Vert ^{n}\frac{\left(  \ln t\right)  ^{n\alpha+k\alpha+\beta-1}%
}{\Gamma\left(  n\alpha+k\alpha+\beta\right)  }\\
&  =\sum_{k=0}^{p}%
{\displaystyle\sum\limits_{n=0}^{\infty}}
\left(
\begin{array}
[c]{c}%
n+k\\
k
\end{array}
\right)  \left\Vert \mathcal{A}_{1}\right\Vert ^{k}\left\Vert \mathcal{A}%
_{0}\right\Vert ^{n}\frac{\left(  \ln t\right)  ^{n\alpha+k\alpha+\beta-1}%
}{\Gamma\left(  n\alpha+k\alpha+\beta\right)  }\\
&  =Y_{1,\alpha,\beta}^{\left\Vert \mathcal{A}_{0}\right\Vert ,\left\Vert
\mathcal{A}_{1}\right\Vert }\left(  t,1\right)  .
\end{align*}

\end{proof}

Our first result on existence and uniqueness of (\ref{ine1}) is based on the
Banach contraction principle.

\begin{theorem}
\label{thm:e}Assume that (A1), (A2) hold. If
\[
L_{f}\Gamma\left(  1-\gamma\right)  \left(  \ln T\right)  ^{\gamma}%
Y_{\alpha,\alpha-\gamma+1}^{\left\Vert \mathcal{A}_{0}\right\Vert ,\left\Vert
\mathcal{A}_{1}\right\Vert }\left(  T,1\right)  <1,
\]
then the Cauchy problem (\ref{de2}) has a unique solution on $[1,T]$.
\end{theorem}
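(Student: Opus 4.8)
The plan is to recast (\ref{de2}) as a fixed point problem for the nonlinear operator $\mathcal{T}$ given by the right-hand side of (\ref{ine1}) and to apply the Banach contraction principle on the Banach space $C_{\gamma,\ln}(1,T]$. By the Corollary at the end of Section~2 (with $f(s)$ replaced by $f(s,y(s))$), a function $y\in C_{\gamma,\ln}(1,T]$ solves (\ref{de2}) on $[1,T]$ if and only if $\mathcal{T}y=y$. First I would verify that $\mathcal{T}$ maps $C_{\gamma,\ln}(1,T]$ into itself: the first two summands of (\ref{ine1}) do not depend on $y$ and are continuous on $[1,T]$ (each is a finite sum of continuous branches of $Y_{h,\alpha,\alpha}^{\mathcal{A}_{0},\mathcal{A}_{1}}$), while for the last summand (A1)--(A2) give $\|f(s,y(s))\|\le L_{f}\|y\|_{\gamma}(\ln s)^{-\gamma}+\|f(s,0)\|$, so the integrand carries only the integrable singularities $(\ln s)^{-\gamma}$ at $s=1$ and $(\ln\frac{t}{s})^{\alpha-1}$ at $s=t$ (both exponents exceed $-1$), and the integral term in fact belongs to $C[1,T]$.

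The heart of the argument is the contraction estimate. For $y_{1},y_{2}\in C_{\gamma,\ln}(1,T]$ only the last summand depends on $y$, so by Lemma~\ref{lem:5} and the definition of $\|\cdot\|_{\gamma}$ (with $a=1$),
\[
\|(\mathcal{T}y_{1})(t)-(\mathcal{T}y_{2})(t)\|\le L_{f}\int_{1}^{t}\big\|Y_{h,\alpha,\alpha}^{\mathcal{A}_{0},\mathcal{A}_{1}}(t,s)\big\|\,\|y_{1}(s)-y_{2}(s)\|\frac{ds}{s}\le L_{f}\|y_{1}-y_{2}\|_{\gamma}\int_{1}^{t}Y_{h,\alpha,\alpha}^{\|\mathcal{A}_{0}\|,\|\mathcal{A}_{1}\|}(t,s)(\ln s)^{-\gamma}\frac{ds}{s}.
\]
The core computation is to evaluate the last integral. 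Expanding $Y_{h,\alpha,\alpha}^{\|\mathcal{A}_{0}\|,\|\mathcal{A}_{1}\|}(t,s)$ by (\ref{perf1}) (the scalars $\|\mathcal{A}_{0}\|,\|\mathcal{A}_{1}\|$ commute) and interchanging summation and integration (justified by locally uniform convergence of the Mittag--Leffler series), the $k$-th Heaviside branch is supported on $s<t/h^{k}$; the substitution $\sigma=sh^{k}$ together with (\ref{inq1}) of Lemma~\ref{lem:2}, applied with $r=h^{k}$, $a=(n+k)\alpha+\alpha-1$, $b=-\gamma$, gives
\[
\int_{h^{k}}^{t}\Big(\ln\tfrac{t}{\sigma}\Big)^{(n+k)\alpha+\alpha-1}\Big(\ln\tfrac{\sigma}{h^{k}}\Big)^{-\gamma}\frac{d\sigma}{\sigma}=\Big(\ln\tfrac{t}{h^{k}}\Big)^{(n+k)\alpha+\alpha-\gamma}\,\mathcal{B}\big[(n+k)\alpha+\alpha,\,1-\gamma\big].
\]
Since $\mathcal{B}[(n+k)\alpha+\alpha,1-\gamma]=\Gamma((n+k)\alpha+\alpha)\Gamma(1-\gamma)/\Gamma((n+k)\alpha+\alpha-\gamma+1)$, the factor $\Gamma((n+k)\alpha+\alpha)$ cancels the denominator from (\ref{perf1}), and the resulting double series is exactly $\Gamma(1-\gamma)\,Y_{h,\alpha,\alpha-\gamma+1}^{\|\mathcal{A}_{0}\|,\|\mathcal{A}_{1}\|}(t,1)$, i.e.\ formula (\ref{perf1}) with the shifted second index $\beta=\alpha-\gamma+1>1$.

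Because $\beta-1=\alpha-\gamma>0$, Lemma~\ref{lem:5} now applies with this $\beta$ and gives $Y_{h,\alpha,\alpha-\gamma+1}^{\|\mathcal{A}_{0}\|,\|\mathcal{A}_{1}\|}(t,1)\le Y_{1,\alpha,\alpha-\gamma+1}^{\|\mathcal{A}_{0}\|,\|\mathcal{A}_{1}\|}(t,1)$, which is nondecreasing in $t$ (all coefficients and exponents are positive), hence bounded by $Y_{\alpha,\alpha-\gamma+1}^{\|\mathcal{A}_{0}\|,\|\mathcal{A}_{1}\|}(T,1)$ on $[1,T]$. Multiplying through by $(\ln t)^{\gamma}\le(\ln T)^{\gamma}$ and taking the supremum over $t\in(1,T]$ yields
\[
\|\mathcal{T}y_{1}-\mathcal{T}y_{2}\|_{\gamma}\le L_{f}\Gamma(1-\gamma)(\ln T)^{\gamma}\,Y_{\alpha,\alpha-\gamma+1}^{\|\mathcal{A}_{0}\|,\|\mathcal{A}_{1}\|}(T,1)\,\|y_{1}-y_{2}\|_{\gamma}.
\]
The hypothesis says this constant is $<1$, so $\mathcal{T}$ is a contraction on $C_{\gamma,\ln}(1,T]$; the Banach fixed point theorem then yields a unique fixed point, i.e.\ a unique solution of (\ref{de2}) on $[1,T]$. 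I expect the main obstacle to be the bookkeeping in the core computation: tracking the index shift $\alpha\mapsto\alpha-\gamma+1$, the $h^{k}$-translated supports of the Heaviside branches, the legitimacy of exchanging the Mittag--Leffler series with the integral in the presence of the two weak singularities, and the positivity and monotonicity needed to invoke Lemma~\ref{lem:5} with the new second index.
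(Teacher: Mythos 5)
Your proposal is correct and follows essentially the same route as the paper: the same reformulation as the fixed-point equation (\ref{ine1}), the same Beta-function computation via Lemma \ref{lem:2} producing the shifted index $\alpha-\gamma+1$, the same majorization via Lemma \ref{lem:5}, and the same contraction constant $L_{f}\Gamma(1-\gamma)(\ln T)^{\gamma}Y_{1,\alpha,\alpha-\gamma+1}^{\Vert\mathcal{A}_{0}\Vert,\Vert\mathcal{A}_{1}\Vert}(T,1)$ fed into the Banach fixed point theorem. The only (harmless) deviation is that you contract on all of $C_{\gamma,\ln}$, whereas the paper restricts to a closed ball $\mathcal{B}_{r}$ and adds a separate invariance step with the constants $M_{1},M_{2}$.
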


\begin{proof}
We define an operator $\Theta$ on $\mathcal{B}_{r}:=\left\{  y\in
C_{\gamma,\ln}\left[  1,T\right]  :\left\Vert y\right\Vert _{\gamma}\leq
r\right\}  $ as follows
\begin{align*}
\left(  \Theta y\right)  \left(  t\right)   &  =Y_{h,\alpha,\alpha
}^{\mathcal{A}_{0},\mathcal{A}}\left(  t,\frac{1}{h}\right)  a+\int_{\frac
{1}{h}}^{1}Y_{h,\alpha,\alpha}^{\mathcal{A}_{0},\mathcal{A}_{1}}\left(
t,s\right)  \left[  \left(  ^{H}D_{\frac{1}{h}^{+}}^{\alpha}\varphi\right)
\left(  s\right)  -\mathcal{A}_{0}\varphi\left(  s\right)  \right]  \frac
{ds}{s}\\
&  +\int_{1}^{t}Y_{h,\alpha,\alpha}^{\mathcal{A}_{0},\mathcal{A}_{1}}\left(
t,s\right)  f\left(  s,y\left(  s\right)  \right)  \frac{ds}{s},
\end{align*}
where $r\geq\dfrac{M_{2}}{1-M_{1}}$,
\begin{align*}
M_{2} &  :=\left(  \ln T\right)  ^{\gamma}Y_{1,\alpha,\alpha}^{\left\Vert
\mathcal{A}_{0}\right\Vert ,\left\Vert \mathcal{A}_{1}\right\Vert }\left(
T,1\right)  \left\Vert a\right\Vert +\Gamma\left(  1-\gamma\right)  \left(
\ln T\right)  ^{\gamma}Y_{1,\alpha,\alpha-\gamma+1}^{\left\Vert \mathcal{A}%
_{0}\right\Vert ,\left\Vert \mathcal{A}_{1}\right\Vert }\left(  T,1\right)
\left\Vert \left(  ^{H}D_{\frac{1}{h}^{+}}^{\alpha}\varphi\right)
-\mathcal{A}_{0}\varphi\right\Vert _{\gamma,\ln}\\
&  \ \ \ +L_{2}\left(  \ln T\right)  ^{\gamma+1}Y_{1,\alpha,\alpha
}^{\left\Vert \mathcal{A}_{0}\right\Vert ,\left\Vert \mathcal{A}%
_{1}\right\Vert }\left(  T,1\right)  ,\\
M_{1} &  :=L_{f}\Gamma\left(  1-\gamma\right)  \left(  \ln T\right)  ^{\gamma
}Y_{\alpha,\alpha-\gamma+1}^{\left\Vert \mathcal{A}_{0}\right\Vert ,\left\Vert
\mathcal{A}\right\Vert }\left(  T,1\right)  .
\end{align*}

It is obvious that $\Theta$ is well defined due to (A1). Therefore, the
existence of a solution of the Cauchy problem (\ref{de2}) is equivalent to
that of the operator $\Theta$ has a fixed point on $\mathcal{B}_{r}$. We will
use the Banach contraction principle to prove that $\Theta$ has a fixed point.
The proof is divided into two steps.

\textit{Step 1}. $\Theta y\in\mathcal{B}_{r}$ for any $y\in\mathcal{B}_{r}$.

Indeed, for any $y\in\mathcal{B}_{r}$ and any $\delta>0,$ by (A3)
\begin{align}
\left\Vert \left(  \ln t\right)  ^{\gamma}\left(  \Theta y\right)  \left(
t\right)  \right\Vert  &  \leq\left(  \ln t\right)  ^{\gamma}Y_{1,\alpha
,\alpha}^{\left\Vert \mathcal{A}_{0}\right\Vert ,\left\Vert \mathcal{A}%
_{1}\right\Vert }\left(  t,\frac{1}{h}\right)  \left\Vert a\right\Vert
+\left(  \ln t\right)  ^{\gamma}\int_{\frac{1}{h}}^{1}Y_{h,\alpha,\alpha
}^{\left\Vert \mathcal{A}_{0}\right\Vert ,\left\Vert \mathcal{A}%
_{1}\right\Vert }\left(  t,s\right)  \left\Vert \left(  \left(  ^{H}%
D_{\frac{1}{h}^{+}}^{\alpha}\varphi\right)  \right)  \left(  s\right)
-\mathcal{A}_{0}\varphi\left(  s\right)  \right\Vert \frac{ds}{s}\nonumber\\
&  +\left(  \ln t\right)  ^{\gamma}\int_{1}^{t}Y_{h,\alpha,\alpha}^{\left\Vert
\mathcal{A}_{0}\right\Vert ,\left\Vert \mathcal{A}_{1}\right\Vert }\left(
t,s\right)  \left\Vert f\left(  s,y\left(  s\right)  \right)  \right\Vert
\frac{ds}{s} \label{est1}%
\end{align}
Firstly, we estimate the first integral:
\begin{align}
&  \left(  \ln t\right)  ^{\gamma}\int_{\frac{1}{h}}^{1}Y_{h,\alpha,\alpha
}^{\left\Vert \mathcal{A}_{0}\right\Vert ,\left\Vert \mathcal{A}%
_{1}\right\Vert }\left(  t,s\right)  \left\Vert \left(  \left(  ^{H}%
D_{\frac{1}{h}^{+}}^{\alpha}\varphi\right)  \right)  \left(  s\right)
-\mathcal{A}_{0}\varphi\left(  s\right)  \right\Vert \frac{ds}{s}\nonumber\\
&  \leq\left(  \ln t\right)  ^{\gamma}\sum_{k=0}^{p}%
{\displaystyle\sum\limits_{n=0}^{\infty}}
\left(
\begin{array}
[c]{c}%
n+k\\
k
\end{array}
\right)  \left\Vert \mathcal{A}_{1}\right\Vert ^{k}\left\Vert \mathcal{A}%
_{0}\right\Vert ^{n}\int_{\frac{1}{h}}^{1}\frac{\left(  \ln t-\ln
sh^{k}\right)  ^{n\alpha+k\alpha+\alpha-1}}{\Gamma\left(  n\alpha
+k\alpha+\alpha\right)  }\left(  \ln s\right)  ^{-\gamma}\frac{ds}%
{s}\left\Vert \left(  ^{H}D_{\frac{1}{h}^{+}}^{\alpha}\varphi\right)
-\mathcal{A}_{0}\varphi\right\Vert _{\gamma,\ln}\nonumber\\
&  \leq\Gamma\left(  1-\gamma\right)  \left(  \ln t\right)  ^{\gamma}%
\sum_{k=0}^{p}%
{\displaystyle\sum\limits_{n=0}^{\infty}}
\left(
\begin{array}
[c]{c}%
n+k\\
k
\end{array}
\right)  \left\Vert \mathcal{A}_{1}\right\Vert ^{k}\left\Vert \mathcal{A}%
_{0}\right\Vert ^{n}\frac{\left(  \ln t\right)  ^{n\alpha+k\alpha
+\alpha-\gamma}}{\Gamma\left(  n\alpha+k\alpha+\alpha-\gamma+1\right)
}\left\Vert \left(  ^{H}D_{\frac{1}{h}^{+}}^{\alpha}\varphi\right)
-\mathcal{A}_{0}\varphi\right\Vert _{\gamma,\ln}\nonumber\\
&  =\Gamma\left(  1-\gamma\right)  \left(  \ln t\right)  ^{\gamma}%
Y_{1,\alpha,\alpha-\gamma+1}^{\left\Vert \mathcal{A}_{0}\right\Vert
,\left\Vert \mathcal{A}\right\Vert }\left(  t,1\right)  \left\Vert \left(
^{H}D_{\frac{1}{h}^{+}}^{\alpha}\varphi\right)  -\mathcal{A}_{0}%
\varphi\right\Vert _{\gamma,\ln}. \label{est2}%
\end{align}
Similarly,
\begin{align}
&  \left(  \ln t\right)  ^{\gamma}\int_{\frac{1}{h}}^{t}Y_{h,\alpha,\alpha
}^{\left\Vert \mathcal{A}_{0}\right\Vert ,\left\Vert \mathcal{A}%
_{1}\right\Vert }\left(  t,s\right)  \left\Vert f\left(  s,y\left(  s\right)
\right)  \right\Vert \frac{ds}{s}\nonumber\\
&  \leq\left(  \ln t\right)  ^{\gamma}\int_{\frac{1}{h}}^{t}Y_{h,\alpha
,\alpha}^{\left\Vert \mathcal{A}_{0}\right\Vert ,\left\Vert \mathcal{A}%
_{1}\right\Vert }\left(  t,s\right)  \left(  L_{1}\left\Vert y\left(
s\right)  \right\Vert +L_{2}\right)  \frac{ds}{s}\nonumber\\
&  \leq L_{f}\Gamma\left(  1-\gamma\right)  \left(  \ln t\right)  ^{\gamma
}Y_{h,\alpha,\alpha-\gamma+1}^{\left\Vert \mathcal{A}_{0}\right\Vert
,\left\Vert \mathcal{A}_{1}\right\Vert }\left(  t,\frac{1}{h}\right)
\left\Vert y\right\Vert _{\gamma,\ln}+L_{2}\left(  \ln t\right)  ^{\gamma
+1}Y_{1,\alpha,\alpha}^{\left\Vert \mathcal{A}_{0}\right\Vert ,\left\Vert
\mathcal{A}_{1}\right\Vert }\left(  t,\frac{1}{h}\right)  . \label{est3}%
\end{align}

Inserting (\ref{est2}) and (\ref{est3}) into (\ref{est1}) we get
\begin{align*}
&  \left\Vert \left(  \ln t\right)  ^{\gamma}\left(  \Theta y\right)  \left(
t\right)  \right\Vert \\
&  \leq\left(  \ln t\right)  ^{\gamma}Y_{1,\alpha,\alpha}^{\left\Vert
\mathcal{A}_{0}\right\Vert ,\left\Vert \mathcal{A}_{1}\right\Vert }\left(
t,1\right)  \left\Vert a\right\Vert +\Gamma\left(  1-\gamma\right)  \left(
\ln t\right)  ^{\gamma}Y_{1,\alpha,\alpha-\gamma+1}^{\left\Vert \mathcal{A}%
_{0}\right\Vert ,\left\Vert \mathcal{A}_{1}\right\Vert }\left(  t,1\right)
\left\Vert ^{H}D_{1^{+}}^{\alpha}\varphi-\mathcal{A}_{0}\varphi\right\Vert
_{\gamma,\ln}\\
&  +L_{2}\left(  \ln t\right)  ^{\gamma+1}Y_{1,\alpha,\alpha}^{\left\Vert
\mathcal{A}_{0}\right\Vert ,\left\Vert \mathcal{A}_{1}\right\Vert }\left(
t,1\right)  +L_{f}\Gamma\left(  1-\gamma\right)  \left(  \ln t\right)
^{\gamma}Y_{1,\alpha,\alpha-\gamma+1}^{\left\Vert \mathcal{A}_{0}\right\Vert
,\left\Vert \mathcal{A}_{1}\right\Vert }\left(  t,1\right)  \left\Vert
y\right\Vert _{\gamma,\ln}\\
&  \leq M_{2}+M_{1}\left\Vert y\right\Vert _{\gamma,\ln}\leq M_{2}+M_{1}r\leq
r
\end{align*}

\textit{Step 2.} Let $y,z\in C_{\gamma,\ln}\left[  1,T\right]  $. Then similar
to the estimation (\ref{est3}) we get
\begin{align*}
&  \left\Vert \left(  \ln t\right)  ^{\gamma}\left(  \left(  \Theta y\right)
\left(  t\right)  -\left(  \Theta z\right)  \left(  t\right)  \right)
\right\Vert \leq\left(  \ln t\right)  ^{\gamma}\int_{1}^{t}Y_{h,\alpha,\alpha
}^{\left\Vert \mathcal{A}_{0}\right\Vert ,\left\Vert \mathcal{A}%
_{1}\right\Vert }\left(  t,s\right)  \left\Vert f\left(  s,y\left(  s\right)
\right)  -f\left(  s,z\left(  s\right)  \right)  \right\Vert \frac{ds}{s}\\
&  \leq\left(  \ln t\right)  ^{\gamma}\int_{1}^{t}\sum_{k=0}^{p}%
{\displaystyle\sum\limits_{n=0}^{\infty}}
\left(
\begin{array}
[c]{c}%
n+k\\
k
\end{array}
\right)  \left\Vert \mathcal{A}_{1}\right\Vert ^{k}\left\Vert \mathcal{A}%
_{0}\right\Vert ^{n}\frac{\left(  \ln t-\ln sh^{k}\right)  ^{n\alpha
+k\alpha+\alpha-1}}{\Gamma\left(  n\alpha+k\alpha+\alpha\right)  }\left\Vert
f\left(  s,y\left(  s\right)  \right)  -f\left(  s,z\left(  s\right)  \right)
\right\Vert \frac{ds}{s}\\
&  \leq L_{f}\left(  \ln t\right)  ^{\gamma}\int_{1}^{t}\sum_{k=0}^{p}%
{\displaystyle\sum\limits_{n=0}^{\infty}}
\left(
\begin{array}
[c]{c}%
n+k\\
k
\end{array}
\right)  \left\Vert \mathcal{A}_{1}\right\Vert ^{k}\left\Vert \mathcal{A}%
_{0}\right\Vert ^{n}\frac{\left(  \ln t-\ln s\right)  ^{n\alpha+k\alpha
+\alpha-1}}{\Gamma\left(  n\alpha+k\alpha+\alpha\right)  }\left(  \ln
s\right)  ^{-\gamma}\left(  \ln s\right)  ^{\gamma}\left\Vert y\left(
s\right)  -z\left(  s\right)  \right\Vert \frac{ds}{s}\\
&  \leq L_{f}\left(  \ln t\right)  ^{\gamma}\sum_{k=0}^{p}%
{\displaystyle\sum\limits_{n=0}^{\infty}}
\left(
\begin{array}
[c]{c}%
n+k\\
k
\end{array}
\right)  \left\Vert \mathcal{A}_{1}\right\Vert ^{k}\left\Vert \mathcal{A}%
_{0}\right\Vert ^{n}\int_{1}^{t}\frac{\left(  \ln t-\ln s\right)
^{n\alpha+k\alpha+\alpha-1}}{\Gamma\left(  n\alpha+k\alpha+\alpha\right)
}\left(  \ln s\right)  ^{-\gamma}\frac{ds}{s}\left\Vert y-z\right\Vert
_{\gamma,\ln}\\
&  \leq L_{f}\Gamma\left(  1-\gamma\right)  \left(  \ln t\right)  ^{\gamma
}\sum_{k=0}^{p}%
{\displaystyle\sum\limits_{n=0}^{\infty}}
\left(
\begin{array}
[c]{c}%
n+k\\
k
\end{array}
\right)  \left\Vert \mathcal{A}_{1}\right\Vert ^{k}\left\Vert \mathcal{A}%
_{0}\right\Vert ^{n}\frac{\left(  \ln t\right)  ^{n\alpha+k\alpha
+\alpha-\gamma}}{\Gamma\left(  n\alpha+k\alpha+\alpha-\gamma+1\right)
}\left\Vert y-z\right\Vert _{\gamma,\ln}\\
&  =L_{f}\Gamma\left(  1-\gamma\right)  \left(  \ln t\right)  ^{\gamma
}Y_{1,\alpha,\alpha-\gamma+1}^{\left\Vert \mathcal{A}_{0}\right\Vert
,\left\Vert \mathcal{A}_{1}\right\Vert }\left(  t,1\right)  \left\Vert
y-z\right\Vert _{\gamma,\ln},
\end{align*}
which implies that
\begin{equation}
\left\Vert \Theta y-\Theta z\right\Vert _{\gamma,\ln}\leq L_{f}\Gamma\left(
1-\gamma\right)  \left(  \ln T\right)  ^{\gamma}Y_{1,\alpha,\alpha-\gamma
+1}^{\left\Vert \mathcal{A}_{0}\right\Vert ,\left\Vert \mathcal{A}%
_{1}\right\Vert }\left(  T,1\right)  \left\Vert y-z\right\Vert _{\gamma,\ln
}.\label{fp}%
\end{equation}
Hence, the operator $\Theta$ is contraction on $\mathcal{B}_{r}$ and the proof
is competed by using the Banach fixed point theorem.
\end{proof}

Secondly, we discuss the Ulam-Hyers stability for the problems (\ref{de2}) by
means of integral operator given by%
\[
y\left(  t\right)  =\left(  \Theta y\right)  \left(  t\right)  ,\ \
\]
where $\Theta$ is defined by (\ref{ine1}).

Define the following nonlinear operator $Q:C_{\gamma,\ln}([1,T],\mathbb{R}%
^{n})\rightarrow C_{\gamma,\ln}([1,T],\mathbb{R}^{n})$:%
\[
Q\left(  y\right)  \left(  t\right)  :=\left(  ^{H}D_{1^{+}}^{\alpha}y\right)
\left(  t\right)  -\mathcal{A}_{0}y\left(  t\right)  -\mathcal{A}_{1}y\left(
\frac{t}{h}\right)  -f\left(  t,y\left(  t\right)  \right)  .
\]
For some $\varepsilon>0$, we look at the following inequality:
\begin{equation}
\left\Vert Q\left(  y\right)  \right\Vert _{\gamma,\ln}\leq\varepsilon.
\label{ul1}%
\end{equation}

\begin{definition}
We say that the equation (\ref{ine1}) is Ulam-Hyers stable, if there exist
$V>0$ such that for every solution $y^{\ast}\in C_{\gamma,\ln}([\frac{1}%
{h},T],\mathbb{R}^{n})$ of the inequality (\ref{ul1}), there exists a unique
solution $y\in C_{\gamma,\ln}([\frac{1}{h},T],\mathbb{R}^{n})$ of problem
(\ref{ine1}) with
\begin{equation}
\left\Vert y-y^{\ast}\right\Vert _{\gamma,\ln}\leq V\varepsilon. \label{ul2}%
\end{equation}

\end{definition}

\begin{theorem}
\label{Thm:3}Under the assumptions of Theorem \ref{thm:e}, the problem
(\ref{ine1}) is stable inUlam-Hyers sense.
\end{theorem}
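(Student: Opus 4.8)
The plan is to obtain Ulam--Hyers stability as a routine consequence of the Banach fixed point machinery already set up for Theorem~\ref{thm:e}. Write $\Theta$ for the integral operator defined by (\ref{ine1}), write $M_{1}$ for its Lipschitz constant appearing in (\ref{fp}) (so $M_{1}<1$ by the hypothesis inherited from Theorem~\ref{thm:e}), and let $y=\Theta y$ be the unique fixed point furnished by that theorem. The goal is to show that any $y^{\ast}$ satisfying (\ref{ul1}) lies within $O(\varepsilon)$ of $y$ in the norm $\Vert\cdot\Vert_{\gamma,\ln}$.

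The crux of the proof is to convert the differential inequality (\ref{ul1}) into an integral one. Given $y^{\ast}$ with $\Vert Q(y^{\ast})\Vert_{\gamma,\ln}\le\varepsilon$, put $g(t):=Q(y^{\ast})(t)$; then $y^{\ast}$ solves the linear Hadamard delay problem (\ref{de11}) with the same data $a,\varphi$ but with forcing term $f(t,y^{\ast}(t))+g(t)$, so the representation formula of the Corollary following Theorem~\ref{thm:2} (which itself combines Theorems~\ref{thm:1} and \ref{thm:2}), applied to this forcing term, gives
\[
y^{\ast}(t)=(\Theta y^{\ast})(t)+\int_{1}^{t}Y_{h,\alpha,\alpha}^{\mathcal{A}_{0},\mathcal{A}_{1}}(t,s)\,g(s)\,\frac{ds}{s}.
\]
Multiplying by $(\ln t)^{\gamma}$, bounding $\Vert Y_{h,\alpha,\alpha}^{\mathcal{A}_{0},\mathcal{A}_{1}}(t,s)\Vert$ by $Y_{1,\alpha,\alpha}^{\Vert\mathcal{A}_{0}\Vert,\Vert\mathcal{A}_{1}\Vert}(t,1)$ via Lemma~\ref{lem:5}, using $\Vert g(s)\Vert\le(\ln s)^{-\gamma}\varepsilon$, and then repeating the termwise Beta-function estimate that led to (\ref{est2})--(\ref{est3}), I would obtain
\[
\Vert y^{\ast}-\Theta y^{\ast}\Vert_{\gamma,\ln}\le\Gamma(1-\gamma)(\ln T)^{\gamma}Y_{1,\alpha,\alpha-\gamma+1}^{\Vert\mathcal{A}_{0}\Vert,\Vert\mathcal{A}_{1}\Vert}(T,1)\,\varepsilon=:K\varepsilon.
\]
The convergence of the integrals $\int_{1}^{t}(\ln t-\ln s)^{n\alpha+k\alpha+\alpha-1}(\ln s)^{-\gamma}\frac{ds}{s}$ here is precisely where the assumption $\gamma<\alpha$ from (A1) enters, and the finiteness of $K$ follows from the everywhere-convergent series defining the delayed M-L type function on the compact interval $[1,T]$.

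Once this is in place, the rest is bookkeeping: since $y=\Theta y$ and $\Theta$ is a contraction with constant $M_{1}$,
\[
\Vert y-y^{\ast}\Vert_{\gamma,\ln}\le\Vert\Theta y-\Theta y^{\ast}\Vert_{\gamma,\ln}+\Vert\Theta y^{\ast}-y^{\ast}\Vert_{\gamma,\ln}\le M_{1}\Vert y-y^{\ast}\Vert_{\gamma,\ln}+K\varepsilon,
\]
whence $\Vert y-y^{\ast}\Vert_{\gamma,\ln}\le K\varepsilon/(1-M_{1})$, which is exactly (\ref{ul2}) with $V:=K/(1-M_{1})$; thus (\ref{ine1}) is stable in the Ulam-Hyers sense. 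The only non-mechanical part of the argument is the first step, namely justifying that a function merely satisfying (\ref{ul1}) admits the delayed M-L representation with perturbed forcing $f(\cdot,y^{\ast})+g$ and that the correction term $\int_{1}^{t}Y_{h,\alpha,\alpha}^{\mathcal{A}_{0},\mathcal{A}_{1}}(t,s)g(s)\frac{ds}{s}$ is controlled uniformly on $[1,T]$ in the weighted norm; this is where Lemma~\ref{lem:5} and the restriction $\gamma<\alpha$ do the real work.
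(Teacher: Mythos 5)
Your proposal is correct and follows essentially the same route as the paper: rewrite $y^{\ast}=\Theta y^{\ast}+\int_{1}^{t}Y_{h,\alpha,\alpha}^{\mathcal{A}_{0},\mathcal{A}_{1}}(t,s)Q(y^{\ast})(s)\frac{ds}{s}$ via the representation formula, bound the correction term by a constant times $\varepsilon$ using Lemma~\ref{lem:5}, and close with the triangle inequality and the contraction estimate (\ref{fp}). The only (immaterial) difference is the explicit constant $K$ you extract for the correction integral, where you keep the weight $(\ln s)^{-\gamma}$ and the Beta-function estimate while the paper uses a cruder bound yielding $(\ln T)^{\gamma+1}Y_{1,\alpha,\alpha}^{\left\Vert \mathcal{A}_{0}\right\Vert ,\left\Vert \mathcal{A}_{1}\right\Vert }(T,1)$.
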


\begin{proof}
Let $y\in C_{\gamma,\ln}([\frac{1}{h},T],\mathbb{R}^{n})$ be the solution of
\ the problem (\ref{ine1}). Let $y^{\ast}$ be any solution satisfying
(\ref{ul1}):
\[
\left(  ^{H}D_{1^{+}}^{\alpha}y^{\ast}\right)  \left(  t\right)
=\mathcal{A}_{0}y^{\ast}\left(  t\right)  +\mathcal{A}_{1}y^{\ast}\left(
\frac{t}{h}\right)  +f\left(  t,y^{\ast}\left(  t\right)  \right)  +Q\left(
y^{\ast}\right)  \left(  t\right)  .
\]
So
\[
y^{\ast}\left(  t\right)  =\Theta\left(  y^{\ast}\right)  \left(  t\right)
+\int_{1}^{t}Y_{h,\alpha,\alpha}^{\mathcal{A}_{0},\mathcal{A}_{1}}\left(
t,s\right)  Q\left(  y^{\ast}\right)  \left(  s\right)  \frac{ds}{s}.
\]
It follows that
\begin{align*}
&  \left(  \ln t\right)  ^{\gamma}\left\Vert \Theta\left(  y^{\ast}\right)
\left(  t\right)  -y^{\ast}\left(  t\right)  \right\Vert \leq\left(  \ln
t\right)  ^{\gamma}\int_{1}^{t}\left\Vert Y_{h,\alpha,\alpha}^{\mathcal{A}%
_{0},\mathcal{A}_{1}}\left(  t,s\right)  \right\Vert \left\Vert Q\left(
y^{\ast}\right)  \left(  s\right)  \right\Vert \frac{ds}{s}\\
&  \leq\left(  \ln T\right)  ^{\gamma+1}Y_{1,\alpha,\alpha}^{\left\Vert
\mathcal{A}_{0}\right\Vert ,\left\Vert \mathcal{A}_{1}\right\Vert }\left(
T,1\right)  \varepsilon.
\end{align*}
Therefore, we deduce by the fixed-point property (\ref{fp}) of the operator
$\Theta,$ that
\begin{align}
\left(  \ln t\right)  ^{\gamma}\left\Vert y\left(  t\right)  -y^{\ast}\left(
t\right)  \right\Vert  &  \leq\left(  \ln t\right)  ^{\gamma}\left\Vert
\Theta\left(  y\right)  \left(  t\right)  -\Theta\left(  y^{\ast}\right)
\left(  t\right)  \right\Vert +\left(  \ln t\right)  ^{\gamma}\left\Vert
\Theta\left(  y^{\ast}\right)  \left(  t\right)  -y^{\ast}\left(  t\right)
\right\Vert \nonumber\\
&  \leq L_{f}\Gamma\left(  1-\gamma\right)  \left(  \ln t\right)  ^{\gamma
}Y_{1,\alpha,\alpha-\gamma+1}^{\left\Vert \mathcal{A}_{0}\right\Vert
,\left\Vert \mathcal{A}_{1}\right\Vert }\left(  t,1\right)  \left\Vert
y-y^{\ast}\right\Vert _{\gamma,\ln}+\left(  \ln T\right)  ^{\gamma
+1}Y_{1,\alpha,\alpha}^{\left\Vert \mathcal{A}_{0}\right\Vert ,\left\Vert
\mathcal{A}_{1}\right\Vert }\left(  T,1\right)  \varepsilon, \label{ul7}%
\end{align}
and
\[
\left\Vert y-y^{\ast}\right\Vert _{\gamma,\ln}\leq\frac{\left(  \ln T\right)
^{\gamma+1}Y_{1,\alpha,\alpha}^{\left\Vert \mathcal{A}_{0}\right\Vert
,\left\Vert \mathcal{A}_{1}\right\Vert }\left(  T,1\right)  }{1-L_{f}%
\Gamma\left(  1-\gamma\right)  \left(  \ln T\right)  ^{\gamma}Y_{1,\alpha
,\alpha-\gamma+1}^{\left\Vert \mathcal{A}_{0}\right\Vert ,\left\Vert
\mathcal{A}_{1}\right\Vert }\left(  T,1\right)  }\varepsilon.
\]
Thus, the problem (\ref{de2}) is Ulam-Hyers stable with.
\[
V=\frac{\left(  \ln T\right)  ^{\gamma+1}Y_{1,\alpha,\alpha}^{\left\Vert
\mathcal{A}_{0}\right\Vert ,\left\Vert \mathcal{A}_{1}\right\Vert }\left(
T,1\right)  }{1-L_{f}\Gamma\left(  1-\gamma\right)  \left(  \ln T\right)
^{\gamma}Y_{1,\alpha,\alpha-\gamma+1}^{\left\Vert \mathcal{A}_{0}\right\Vert
,\left\Vert \mathcal{A}_{1}\right\Vert }\left(  T,1\right)  }.
\]

\end{proof}

\section{Example}

In this section, we give an example to illustrate the obtained theoretical
result. Let $\alpha=0.3$, $h=1.2,$ $k=4.$ Consider%
\begin{align}
\left(  ^{H}D_{1^{+}}^{0.3}y\right)  \left(  t\right)   &  =\mathcal{A}%
_{1}y\left(  \frac{t}{1.2}\right)  ,\ \ t\in\left(  1,2.0736\right]
,\nonumber\\
y\left(  t\right)   &  =\varphi\left(  t\right)  ,\ \ \frac{1}{1.2}\leq
t\leq1,\nonumber\\
\left(  ^{H}I_{\frac{1}{1.2}^{+}}^{0.7}y\right)  \left(  \frac{1}{1.2}%
^{+}\right)   &  =a\in R^{n}, \label{ex2}%
\end{align}
where%
\[
\mathcal{A}_{1}=\left(
\begin{tabular}
[c]{ll}%
$2$ & $1$\\
$3$ & $5$%
\end{tabular}
\ \ \ \right)  ,\ \ \ a=\left(
\begin{array}
[c]{c}%
1\\
2
\end{array}
\right)
\]
The solution of (\ref{ex2}) can be represented by $Y_{h,\alpha,\alpha
}^{\mathcal{A}_{0},\mathcal{A}_{1}}\left(  t,\frac{1}{h}\right)
=E_{h,\alpha,\alpha}^{\mathcal{A}_{1}}\left(  \ln t\right)  $%
\[
y\left(  t\right)  =E_{h,\alpha,\alpha}^{\mathcal{A}_{1}}\left(  \ln t\right)
a+\int_{\frac{1}{h}}^{1}E_{h,\alpha,\alpha}^{\mathcal{A}_{1}}\left(  \ln
\frac{t}{s}\right)  \left(  ^{H}D_{\frac{1}{h}^{+}}^{\alpha}\varphi\right)
\left(  s\right)  \frac{ds}{s},
\]
where%

\[
E_{h,\alpha,\alpha}^{\mathcal{A}_{1}}\left(  \ln t\right)  =\left\{
\begin{tabular}
[c]{ll}%
$0,$ & $-\infty<x\leq\frac{1}{1.2},$\\
$I\dfrac{\left(  \ln t+\ln1.2\right)  ^{-0.7}}{\Gamma\left(  0.3\right)  },$ &
$\frac{1}{1.2}<x\leq1,$\\
$I\dfrac{\left(  \ln t+\ln1.2\right)  ^{-0.7}}{\Gamma\left(  0.3\right)
}+\mathcal{A}_{1}\dfrac{\left(  \ln t\right)  ^{-0.4}}{\Gamma\left(
0.6\right)  },$ & $1<x\leq1.2,$\\
$I\dfrac{\left(  \ln t+\ln1.2\right)  ^{-0.7}}{\Gamma\left(  0.3\right)
}+\mathcal{A}_{1}\dfrac{\left(  \ln t\right)  ^{-0.4}}{\Gamma\left(
0.6\right)  }+\mathcal{A}_{1}^{2}\dfrac{\left(  \ln t-\ln1.2\right)  ^{-0.1}%
}{\Gamma\left(  0.9\right)  },$ & $1.2<x\leq\left(  1.2\right)  ^{2},$\\
$I\dfrac{\left(  \ln t+\ln1.2\right)  ^{-0.7}}{\Gamma\left(  0.3\right)
}+\mathcal{A}_{1}\dfrac{\left(  \ln t\right)  ^{-0.4}}{\Gamma\left(
0.6\right)  }+\mathcal{A}_{1}^{2}\dfrac{\left(  \ln t-\ln1.2\right)  ^{-0.1}%
}{\Gamma\left(  0.9\right)  }$ & \\
$+\mathcal{A}_{1}^{3}\dfrac{\left(  \ln t-2\ln1.2\right)  ^{0.2}}%
{\Gamma\left(  1.2\right)  }$ & $\left(  1.2\right)  ^{2}<x\leq\left(
1.2\right)  ^{3},$\\
$I\dfrac{\left(  \ln t+\ln1.2\right)  ^{-0.7}}{\Gamma\left(  0.3\right)
}+\mathcal{A}_{1}\dfrac{\left(  \ln t\right)  ^{-0.4}}{\Gamma\left(
0.6\right)  }+\mathcal{A}_{1}^{2}\dfrac{\left(  \ln t-\ln1.2\right)  ^{-0.1}%
}{\Gamma\left(  0.9\right)  }$ & \\
$+\mathcal{A}_{1}^{3}\dfrac{\left(  \ln t-2\ln1.2\right)  ^{0.2}}%
{\Gamma\left(  1.2\right)  }+\mathcal{A}_{1}^{4}\dfrac{\left(  \ln
t-3\ln1.2\right)  ^{0.5}}{\Gamma\left(  1.5\right)  }$ & $\left(  1.2\right)
^{3}<x\leq\left(  1.2\right)  ^{4}.$%
\end{tabular}
\ \ \ \right.
\]

\end{document}